\theoremstyle{plain}
\newtheorem{thm}{Theorem}[section]
\newtheorem{prop}[thm]{Proposition}
\newtheorem{lemma}[thm]{Lemma}
\newtheorem{cor}[thm]{Corollary}
\theoremstyle{definition}
\newtheorem{defn}[thm]{Definition}
\newtheorem{eg}[thm]{Example}
\newtheorem{rmk}[thm]{Remark}
\newtheorem{ntn}[thm]{Notation}
\numberwithin{equation}{section}
\def\1{\mathbbm 1}
\def\Z{\mathbb Z}
\def\N{\mathbb N}
\def\C{\mathbb C}
\def\o{\otimes}
\def\lra{\longrightarrow}
\def\RHOM{\mathbf{R}\mathrm{HOM}}
\def\Id{\mathrm{Id}}
\def\mc{\mathcal}
\def\mf{\mathfrak}
\def\Ext{\mathrm{Ext}}
\def\End{\mathrm{End}}
\def\END{\mathrm{END}}
\newcommand{\dmod}{\!-\!\mathrm{mod}}
\newcommand{\dif}{d}   
\newcommand{\mH}{\mathrm{H}}  
\newcommand{\mHH}{\mathrm{HH}}  
\newcommand{\Hom}{{\rm Hom}}
\newcommand{\HOM}{{\rm HOM}}
\def\dif{d}
\def\lra{{\longrightarrow}}
\def\dmod{{\mathrm{-mod}}}   
\def\Ext{{\mathrm{Ext}}}
\def\Id{\mathrm{Id}}
\def\mc{\mathcal}
\def\mf{\mathfrak}
\def\shuffle{\,\raise 1pt\hbox{$\scriptscriptstyle\cup{\mskip
               -4mu}\cup$}\,}
\newcommand{\refequal}[1]{\xy {\ar@{=}^{#1}
(-1,0)*{};(1,0)*{}};
\endxy}
\title{A braid group action on an $A_\infty$-category for zigzag algebras}
\author{Benjamin Cooper, You Qi, Joshua Sussan}
\date{May 4, 2023}
\begin{document}
%

\maketitle

\begin{center}
  \emph{Dedicated to Igor Frenkel on the occasion of his seventieth birthday}
\end{center}

\setcounter{tocdepth}{1}

\begin{abstract}
We construct differential graded enhancements of the zigzag algebras which were used by Khovanov, Seidel and Thomas to produce categorical braid group actions. These enhancements are related to $p$-differential graded structures by a version of Koszul duality. We prove that the minimal model $A_\infty$-structure on the zigzag algebras is {\em not} formal. We construct a braid group action in this setting and suggest a symplectic interpretation.
\end{abstract}

\tableofcontents

\section{Introduction}
In the 1990s, Crane and Frenkel conjectured that the 3-dimensional topological field theories of Witten, Reshetikhin and Turaev could be rewritten in the new categorical language that was appearing in geometric representation theory at the time \cite{CF}. Over the years,  these ideas have revealed a vast mathematical landscape. Although the conjecture remains open, many parts of what we know today are expected to play a role in the final answer. Among these parts, the zigzag algebras $C_{n-1}$, where $n \geq 2$ is a natural number, introduced by Khovanov and Seidel \cite {KS}, also \cite{RouZim}, serve as a kind-of simplest non-trivial example: sophisticated enough for its bounded homotopy category $K^b(C_{n-1})$ to support a faithful categorical representation of the braid group 
$$\Psi_n : \mathrm{Br}_n\to \mathrm{Aut}(K^b(C_{n-1}))\quad\quad\quad\quad \mathrm{Ker}(\Psi_n)=\{e\},$$
yet simple enough to appear in many places. There are interpretations of $C_{n-1}$ in terms of the category $\mathcal{O}(\mathfrak{sl}_{n})$ and perverse sheaves on $\mathbb{P}^{n-1}$. The zigzag algebra also arises naturally in homological realization of Nakajima quiver varieties \cite{FKO}, as well as the endomorphism algebra of a system of Lagrangian spheres \cite{KS}. This latter observation was exploited by Seidel and Thomas to construct categorical braid group actions on many different examples \cite{SeidelThomas}.

In a slightly different direction, Chen, Khovanov, and independently Stroppel, studied, as a special case of their constructions, the Grothendieck group $K_0(A_n)\otimes \mathbb{C}$ of graded modules over a certain algebra $A_n$ related to $C_{n-1}$, proving that it can be identified with the second highest weight space in the representation $(\mathbb{C}(q)^2)^{\otimes n}$ of $U_q(\mathfrak{sl}_2)$ \cite{ChenKho, StroppelGrass}. Under this correspondence, the homomorphism $\Psi_n$ becomes the Burau representation, well-studied in low-dimensional topology \cite{birman}. 

In \cite{Hopforoots}, Khovanov suggested to categorify the operation of setting $q$ to be a $p$th root of unity by introducing a $p$-nilpotent derivation
$$\partial : A \to A  \quad\quad\textnormal{ such that }\quad\quad \partial^p=0$$
and passing from the category of graded $A$-modules to a triangulated category of $(A,\partial)$-modules, (see Section \ref{sec:prelim}). This led Khovanov and the second author to find $p$-nilpotent operators on nilHecke algebras, Khovanov-Lauda-Rouquier algebras and Webster algebras underlying many categorifications of quantum groups and their representations \cite{KQ, EQ1, KQS, QiSussan3}. The observation that the Koszul dual $A_n^!$ of the algebra $A_n$ admits such a $p$-derivation $\partial \in \End(A_n^!)$ compatible with its identification as the Webster algebra associated to the second highest weight of the $U_q(\mathfrak{sl}_2)$ representation $(\mathbb{C}(q)^2)^{\otimes n}$ led to a detailed study of $(A_n^!, \partial)$, ultimately constructing an analogue of the categorical representation $\Psi_n$ above evaluated at a $p$th root of unity \cite{QiSussan1, QiSussan2}.

This paper continues the study of zigzag algebras by examining what happens to this $p$-differential graded structure $\partial : A_n^! \to A_n^!$ under Koszul duality. In short, 
\begin{center} what is $(A_n^!,\partial)^!$? \end{center}
We answer this question when $p=2$. In this case, $(A^!_n, \partial)$ becomes a differential graded algebra and we compute its Koszul dual $A_\infty$-algebra. This is done in stages.
There is a natural bijection between the simple modules $\{L_i\}$ of $A_n^!$ and those of $(A_n^!,\partial)$. We observe that the cofibrant resolutions $\mathbb{P}(L_i)\to L_i$ in Lemma \ref{lemma-DG-ny-resolution} of the simple modules $L_i$ over $(A_n^!, \partial)$ are closely related to, but not the same as projective resolutions of corresponding simple modules $L_i$ over $A_n^!$. Then we construct chain maps between resolutions associated to the generators of the algebra $A_n$. The relations that these chain maps satisfy allow us to introduce differential graded algebras $(E_n, d)$ which act as intermediaries between $(A^!_n,\partial)$ and $A_n$. More precisely,
\begin{enumerate}
    \item There are canonical inclusions of differential graded algebras: $E_n \hookrightarrow \END(\oplus_i \mathbb{P}(L_i))$ inducing isomorphisms $\mH(E_n,d) \cong \Ext^*(\oplus_i L_i, \oplus_i L_i)$.
    \item There are canonical isomorphisms of algebras: $\mH(E_n, d) \cong A_n$.
\end{enumerate}
In this sense, the algebra $(E_n,d)$ is a differential graded (DG) enhancement of the algebra $A_n$. A setting such as this is ripe for an application of the transfer theorem \cite{KeLef, CoSussan}: the algebra $A_n \cong \mH(E_n,d)$ supports a minimal $A_\infty$-structure $(A_n, m)$ where $m = \{ m_r : A_n^{\otimes r}\to A_n[2-r] \}_{r\geq 1}$ with $m_1=0$ which makes $(A_n,m)$ equivalent to $(E_n,d)$ as $A_\infty$-algebras. The question posed above can be answered in equations as
$$(A_n^!,\partial)^! \cong (E_n, d) \cong (A_n, m).$$
Section \ref{sec:ainfzz} contains a computation of this minimal $A_\infty$-algebra structure for 
the algebra $(E_n,d)$. This stable minimal $A_\infty$-algebra is non-trivial, see Theorem \ref{thm:ainfformulas} for formulas, which shows that the $A_\infty$-structures on unstable algebras $A_n$ and $C_{n-1}$ must be non-trivial as well. This result is a little bit surprising because Seidel and Thomas used formality, the vanishing of all $A_\infty$-structures, on zigzag algebras to build their braid group representations see \cite[Lemma 4.21]{SeidelThomas}. Our $A_\infty$-structure on zigzag algebras does not contradict Seidel and Thomas's theorem since a different grading is used, more precisely the operations $m_3 \in CC^3(C_n,C_n)^{-1}$ 
(see Section \ref{sec:ainfzz})
live in a degree of Hochschild cohomology which is not covered by their vanishing result.
 Corresponding vanishing statements feature in related works \cite{LiuWang, BarWang}.

In Section \ref{sec:braidaction}, we check that the derived category of the $A_\infty$-algebra zigzag algebra supports a braid group action. This is done by transporting the braid group action defined in the context of $(A_n^!, \partial)$ using the derived Morita equivalence obtained from the Koszul duality construction. The fact that the $A_\infty$-structure $m$ on the zigzag algebra facilitates a braid group representation, is strong evidence to suggest its relevance and naturality within the context of the Crane and Frenkel conjectures.

Finally, zigzag algebras appear in a variety of contexts. It is natural to ask whether one can interpret this new $A_\infty$-structure within one of those contexts. In Section \ref{sec:symp}, we recall how $A_n$ or $C_{n-1}$ can be found in the Fukaya category of a surface, why there is a braid group representation and see that the equations we discovered from the $A_\infty$ perturbation lemma can be understood in terms of relationships among disks and curves on the surfaces.

\paragraph{Organization.}
We review some tools from homological algebra in Section \ref{sec:prelim}. 
In Section \ref{sec:dualzigzagalgebra}, the dual zigzag algebra is introduced, resolutions $\mathbb{P}(L_i)$ of simples $L_i$ are constructed as well as chain maps between these resolutions. 
In Section \ref{sec:ainfzz} the DG algebra $(E_n,d)$ and its subalgebra $(S_n,d)$ are introduced and the transfer theorem is used to compute the $A_\infty$-structure on the cohomology of $S_n$, which is the zigzag algebra $C_{n-1}$. In Section \ref{sec:braidaction} the categorical braid group representation is constructed. In Section \ref{sec:symp}, the stable $A_\infty$-structure is examined using curves on a surface.

\paragraph{Notations.}
The ground ring will be a field of characteristic 2. Below is a list of the algebras that will play a role in this paper:
\begin{enumerate}
\item $A_n^!$ is the Koszul algebra (one of the simples is not spherical) which has the original DG structure.
\item $A_n$ is the Koszul dual of $A_n^!$ (one of the projectives is not spherical).  This is almost the zigzag algebra.
\item $C_{n-1}$ is a subalgebra of $A_n$.  This is the zigzag algebra and all of the $n-1$ projectives are spherical.
\item $E_n$ is the DG endomorphism algebra of simples of $A_n^!$, and its homology is $A_n$.
\item $E_n^\prime$ is a certain subalgebra of $E_n$ whose homology is $A_n$.
\item $S_{n-1}$ is a DG subalgebra of $E_n$.  Its homology is $C_{n-1}$ (the zigzag algebra).
\end{enumerate}
Most of these algebras are defined at the beginning of Section \ref{sec:ainfzz}. The diagram below may be useful.
$$\begin{tikzpicture}[scale=10, node distance=2cm]
\node (A) {$(S_{n-1},d)$};
\node (C) [right of=A] {$(E_n,d)$};

\node (B) [below of=A] {$C_{n-1}$};

\node (D) [below of=C] {$A_n$};
\draw [->] (A) -- (C) node[midway,above]  {$\subset$};
\draw [->] (B) -- (D) node[midway,above]  {$\subset$};
\draw[|->] (A) -- (B) node [swap,midway,left] {$\mH$};
\draw[|->] (C) -- (D) node [swap,midway,right] {$\mH$};
\end{tikzpicture}$$

\paragraph{Acknowledgements.} 
The authors would like to thank Igor Frenkel for always sharing with us his panoramic view of mathematics and physics, as well as his constant encouragement.

J.S. would like to thank Ben Elias and Anthony Licata for discussions related to the DG enhanced zigzag algebra.

Y.Q. is partially supported by a Simons Foundation Collaboration Grant for Mathematicians.

B.C. thanks K. Kawamuro and M. Tehrani for their cordial collegiality. His participation was facilitated by Simons Award \#638089.


\section{Preliminaries} \label{sec:prelim}

\paragraph{The braid group.}
Let $q$ be a formal variable, and $\Z[q,q^{-1}] $ be the ring of Laurent polynomials. The Temperley-Lieb algebra $\mathrm{TL}_n$ over $\Z[q,q^{-1}]$ is generated by elements $ u_i $ for $i =1, \ldots, n-1 $ subject to the relations that
\begin{enumerate}
\item[(i)] $u_i u_j= u_j u_i$ if $|i-j|>1$,
\item[(ii)] $ u_i^2=(1+q)u_i $ for $ i=1,\ldots, n-1$,
\item[(iii)] $ u_i u_{i + 1} u_i = q u_i $ for $ i=1,\ldots, n-1$ and $ u_i u_{i - 1} u_i = q u_i $ for $ i=2,\ldots, n$.
\end{enumerate}
Consider the  $\Z[q,q^{-1}]$-lattice $V_{n-1}=\oplus_{i=1}^{n-1}\Z[q,q^{-1}] v_i$. Define
\begin{equation}
    u_i(v_j):=
    \begin{cases}
    (1+q)v_i & i=j\\
    v_i & j=i-1\\
    qv_i & j=i+1\\
    0 & |i-j|>1
    \end{cases}
\end{equation}
Base changing from $\Z[q,q^{-1}]$ to $\C$ by sending $q$ to a chosen complex number, we call the resulting $(n-1)$-dimensional representation the (reduced) \emph{Burau representation}, which is still denoted $\mathrm{V}_{n-1}$. 

The braid group $\mathrm{Br}_n$ is generated by elements $ t_i $ for $i =1, \ldots, n-1 $ subject to the relations that
\begin{enumerate}
\item[(i)] $t_i t_j= t_j t_i$ if $|i-j|>1$,
\item[(ii)] $ t_i t_{i +1} t_i = t_{i+1} t_i t_{i+1} $ for $ i=1,\ldots,n-2$.
\end{enumerate}
For $i=1,\ldots, n-1$, sending the braid group generators $t_i$ to $1-u_i$ defines a homomorphism from the group algebra of $\mathrm{Br}_n$ into $\mathrm{TL}_n$. Via this homomorphism, we can also regard $V_{n-1}$ as a representation of the braid group.

\paragraph{Homological algebra.} We gather in this section some necessary homological algebra background that we shall use. We refer the reader to the references \cite{Ke1, KeICM} for more details. 

Fix once and for all a base field $\Bbbk$ of characteristic $2$. Unadorned tensor product $\o$ will stand for the tensor product over $\Bbbk$. Throughout we use the French convention $\N:=\{0,1,2,\dots\}$. 

A \emph{differential graded (DG) algebra} $(A,\dif_A)$ over $\Bbbk$ consists of a $\Z$-graded algebra $A=\oplus_{i\in \Z}A^i$ and a degree one endomorphism $\dif_A$ of $A$ that is $2$-nilpotent and satisfies the Leibniz rule:
\[
\dif_A^2\equiv 0,  \ \ \ \ \dif_A(ab)=\dif_A(a)b+a\dif_A(b),
\]
for any $a,b\in A$. A left (resp. right) DG module $(M,\dif_M)$ over $(A,\dif_A)$ is a left (resp. right) $A$-module equipped with a degree one, $2$-nilpotent endomorphism $\dif_M$, $\dif_M^2\equiv 0$, which is compatible with the differential action on $A$:
\[
\dif_M(ax)=\dif_A(a)x+a\dif_M(x) \ \ \ \ (\textrm{resp.}~\dif_M(xa)=\dif_M(x)a+x\dif_A(a)).
\]
For a DG module $ (M,\dif_M) $ we will also consider the shifted module $ (M[i],\dif_{M[i]}) $ where the degree $j$ component $ M[i]^j$ is equal to $M^{i+j}$.
In what follows, we will be exclusively considering DG algebras of the following type.
\begin{defn}\label{defn-positive-p-DGA}A DG algebra $A$ is called \emph{positive}, if it satisfies the following three conditions.
\begin{itemize}
\item[(i)] The algebra $A$ is non-negatively graded, $A\cong \oplus_{i\in \N}A^i$, and is finite-dimensional in each fixed degree.
\item[(ii)] The homogeneous degree-zero part $A^0$ is semi-simple.
\item[(iii)] The differential $\dif_A$ acts trivially on $A^0$.
\end{itemize}
\end{defn}

The subscripts in the differentials on the DG algebras and modules will be dropped when no confusion can be caused. We adapt some basic definitions from \cite[Section 2]{KQ} in the situation when $A$ is positive, and the reader is referred loc.~cit.~for a summary of the basic definitions and techniques of the homological theory of DG modules. Some further information on the subject in the context of \emph{hopfological algebra} can be found in \cite{QYHopf}. Let $H=\Bbbk[d]/(d^2)$.  We will use the notations $A_\dif\dmod$ (resp.~$\mc{C}(A)$, $\mc{D}(A)$) to stand for the abelian (resp.~homotopy, derived) category of DG modules over $A$. Here $A_\dif$ denotes the \emph{smash product algebra}\footnote{It is usually denoted by $A \# H$.}, which is isomorphic, as a vector space, to $A\o H$, subject to the relations that $A\cong A\o 1\subset A\o H$, $H\cong  1\o H \subset A\o H$ as subalgebras, and the rule for commuting elements
\[
(1\o \dif) \cdot (a\o 1 )=(a\o 1) \cdot(1\o \dif) + \dif(a)\o 1,
\]
where $a$ is an arbitrary element of $A$. 

When $A$ is a positive DG algebra, $A_d$ is also a positively graded algebra with degree zero part isomorphic to $A^0$. Choose a complete set of pairwise non-isomorphic indecomposable idempotents $\{e_i|i\in I\}$ in $A^0$, and set $P_i:=Ae_i$. By construction, $P_i$'s are DG summands of the rank-one free module $A$.

As a matter of convention, given two graded modules $M$, $N$ over a graded algebra $A$, we set $\Hom_{A}(M,N)$ to be the space of degree preserving $A$-module maps between $M$ and $N$. We define
$$\HOM_{A}(M,N):=\oplus_{l\in \Z}\Hom_{A}(M,N[l]) .$$

The categories $\mc{C}(A)$, $\mc{D}(A)$ are triangulated, equipped with the invertible \emph{shift} endo-functors $[l]$ for any $l\in \Z$. 

\begin{defn}\label{def-nice-p-dg-module}Let $A$ be a DG algebra, and $M$ be a DG module over $A$.
\begin{itemize}
\item[(i)] The module $M$ is called \emph{cofibrant} if for any surjective quasi-isomorphism of DG modules $N_1\twoheadrightarrow N_2$, the induced map
\[
\HOM_A(M,N_1)\lra \HOM_A(M,N_2)
\]
of complexes is a homotopy equivalence.
\item[(ii)] When $A$ is positive, we say $M$ is a \emph{finite cell module} if it has a finite exhaustive increasing filtration $F^\bullet$ by DG submodules such that the subquotients
\[
F^\bullet/F^{\bullet-1}\cong P_i[l_i]
\]
for some projective $A$-modules $P_i$, $i=1,\dots ,n$ and $l_i\in \Z$.
\end{itemize}
\end{defn}

Cofibrant modules are ``nice'' in the sense that we have a good control of the morphism spaces between them in the derived category $\mc{D}(A)$. Namely, there is, for any cofibrant module $M$, a functorial-in-$N$ isomorphism of $\Bbbk$-vector spaces
\[
\Hom_{\mc{C}(A)}(M,N)\cong \Hom_{\mc{D}(A)}(M,N),
\]
which is induced from the localization functor $\mc{C}(A)\lra \mc{D}(A)$. It is an easy exercise to show that finite cell modules are cofibrant. It is clear that if $V$ is a finite-dimensional complex $V$, then $A\o V$ is a finite cell module. Finite cell modules in the DG setting are also known as ``one-sided twisted complexes'' of projectives \cite{BondalKapranov}.

\begin{eg}\label{eg-finite-cofibrant-mod}Let $A$ be a DG algebra. Assume that $a_0$ is an element of $A$ with $\dif_A(a_0)=0$. Here we construct an example of a finite cell module that is not of the form $A\o V$.

Consider a free $A$-module on two generators $M:=Av_0\oplus Av_1$, where $\mathrm{deg}(v_0)-\mathrm{deg}(v_1)=1-\mathrm{deg}(a_0)$. Define a differential $\dif_M$ as follows. On generators, it acts by
\[
\dif_M(v_0)=0, \ \ \ \ \dif_M(v_1)=a_0v_0.
\]
It is extended to all of $M$ by the Leibniz rule. We leave it as an exercise for the reader to check that $\dif_M^2\equiv 0$, and $M$ is not isomorphic to the tensor product of $A$ by a two-dimensional complex.
\end{eg}

The morphism space $\Hom_{\mc{D}(A)}(M,N)$ between two DG modules $M,N\in \mc{D}(A)$ is canonically isomorphic to the degree zero cohomology of the complex $\RHOM_{A}(M,N)$
\begin{equation}\label{eqn-morphism-space-as-invariants}
\Hom_{\mc{D}(A)}(M,N) \cong \mH^0( \RHOM_{A}(M,N))
\end{equation}
where $\RHOM_A(M,N):= \HOM_A(\mathbb{P}(M),N)$ is computed using a cofibrant replacement $\mathbb{P}(M)\to M$ of $M$.
More generally, attached to any two objects in a triangulated category one has the ``$\Ext$-space'' as follows.

\begin{defn}\label{def-ext-in-DA} Given any DG modules $M,N\in \mc{D}(A)$ and $i\in \Z$, we set the \emph{$i$th $p$-DG $\Ext$-space}, or simply the \emph{$\Ext$-space} between them, to be
\[
\Ext_{\mc{D}(A)}^i (M,N):=\Hom_{\mc{D}(A)}(M,N[i]),
\]
and we define \emph{the total $\Ext$-space} as
\[
\Ext^\bullet_{\mc{D}(A)}(M,N):=\bigoplus_{i\in \Z}\Ext^i_{\mc{D}(A)}(M,N).
\]
\end{defn}
One can also make these definitions in the homotopy category $\mc{C}(A)$ but we will not need them. It follows from \eqref{eqn-morphism-space-as-invariants} that there is a bi-functorial isomorphism
\begin{equation}\label{eqn-ext-space-as-invariants}
\Ext^\bullet_{\mc{D}(A)}(M,N)\cong \bigoplus_{i\in \Z}\mH^i(\RHOM_{A}(M,N)).
\end{equation}

We next recall the notion of compact modules, which takes place in the derived category.

\begin{defn} \label{def-compact-mod} Let $A$ be a DG algebra. A DG module $M$ over $A$ is called \emph{compact} (in the derived category $\mc{D}(A)$) if and only if, for any family of DG modules $N_i$ where $i$ takes value in some index set $I$, the natural map
\[
\oplus_{i\in I}\Hom_{\mc{D}(A)}(M,N_i)\lra \Hom_{\mc{D}(A)}(M,\oplus_{i\in I} N_i)
\]
is an isomorphism of $\Bbbk$-vector spaces.
\end{defn}

The strictly full subcategory of $\mc{D}(A)$ consisting of compact modules will be denoted by $\mc{D}^c(A)$.  It is triangulated and will be referred to as the \emph{compact derived category}.

For positive DG algebras, we have a close relationship between finite cell modules and compact modules. Denote by $\mc{F}(A)$ the collection of all finite cell modules over $A$, and let $\overline{\mc{F}}(A)$ be the closure of $\mc{F}(A)$ inside $\mc{D}(A)$ under isomorphisms. Notice that the natural inclusion functor $\overline{\mc{F}}(A)\subset \mc{D}(A)$ factors through the compact derived category $\mc{D}^c(A)$.

\begin{prop}\label{prop-characterization-compactness}The natural inclusion functor
$$\overline{\mc{F}}(A)\subset \mc{D}^c(A)$$
is an equivalence of triangulated categories. \hfill$\square$
\end{prop}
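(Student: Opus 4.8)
The plan is to prove the two standard halves of such a statement: first that every finite cell module is compact (so that the inclusion $\overline{\mc{F}}(A)\subset \mc{D}^c(A)$ is well-defined, as already remarked), and second that $\overline{\mc{F}}(A)$ is all of $\mc{D}^c(A)$. The second half is the substantive one, and the route is via the theory of compactly generated triangulated categories: one shows that the free rank-one module $A$ (equivalently the finite collection $\{P_i = Ae_i\}_{i\in I}$) is a compact generator of $\mc{D}(A)$, that $\overline{\mc{F}}(A)$ is a thick (= triangulated, idempotent-closed) subcategory containing these generators, and then invokes the Neeman--Ravenel--Thomason localization theorem: in a compactly generated triangulated category, the compact objects coincide with the smallest thick subcategory containing a set of compact generators. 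Since $\{P_i\}$ generate and $\overline{\mc{F}}(A)$ is thick and contains them, $\overline{\mc{F}}(A) \supseteq \mc{D}^c(A)$; combined with the well-definedness of the inclusion this gives the equality.

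Carrying this out, I would proceed in the following steps. \emph{Step 1: finite cell modules are compact.} Each $P_i = Ae_i$ is a DG summand of the free module $A$; since $\HOM_A(A,N) = N$ commutes with arbitrary direct sums, $A$ is compact, hence so is each summand $P_i$, hence so are the shifts $P_i[l]$. A finite cell module $M$ is built from finitely many such $P_i[l_i]$ by iterated extensions (the filtration $F^\bullet$ gives distinguished triangles $F^{\bullet-1}\to F^\bullet \to P_i[l_i]$), and compact objects are closed under extensions and shifts, so $M$ is compact. Thus $\mc{F}(A)\subseteq \mc{D}^c(A)$, and since $\mc{D}^c(A)$ is closed under isomorphism in $\mc{D}(A)$ we get $\overline{\mc{F}}(A)\subseteq \mc{D}^c(A)$; this is the functor in the statement. \emph{Step 2: $\overline{\mc{F}}(A)$ is thick in $\mc{D}(A)$.} It is closed under shifts by construction. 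For cones: given a morphism $f\colon M\to M'$ of finite cell modules, realize $f$ (up to homotopy) by an honest chain map between cofibrant representatives and observe that the mapping cone of a chain map between finite cell modules again admits a finite exhaustive filtration with projective subquotients, hence is a finite cell module; thus $\overline{\mc{F}}(A)$ is triangulated. Idempotent-completeness of $\overline{\mc{F}}(A)$ follows because $\mc{D}^c(A)$ is idempotent complete (it is the category of compact objects in a category with arbitrary coproducts, so Bökstedt--Neeman applies) and a direct summand in $\mc{D}^c(A)$ of a finite cell module is again a retract of an object of $\overline{\mc{F}}(A)$, which lies in $\overline{\mc{F}}(A)$ once we know $\overline{\mc{F}}(A)$ is closed under retracts --- this last point may require a small argument showing retracts of finite cell modules are finite cell modules up to isomorphism in $\mc{D}(A)$, using positivity of $A$ and an induction on cell length. \emph{Step 3: $\{P_i\}$ compactly generate $\mc{D}(A)$.} Since $\bigoplus_i P_i$ is, up to a summand, the free module $A$, and $\HOM_A(A,N)\simeq N$, an object $N\in\mc{D}(A)$ with $\Hom_{\mc{D}(A)}(P_i[l],N)=0$ for all $i,l$ has vanishing cohomology, hence is zero in $\mc{D}(A)$; so $\{P_i\}_{i\in I}$ is a generating set of compact objects. \emph{Step 4: conclude.} By the localization theorem for compactly generated triangulated categories, $\mc{D}^c(A)$ is the thick closure of $\{P_i\}$; by Steps 2 and 3 this thick closure is contained in $\overline{\mc{F}}(A)$, giving $\mc{D}^c(A)\subseteq \overline{\mc{F}}(A)$. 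With Step 1 this proves the inclusion is an equivalence.

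The main obstacle I anticipate is Step 2, specifically showing that $\overline{\mc{F}}(A)$ is closed under cones and retracts \emph{inside $\mc{D}(A)$} rather than merely up to quasi-isomorphism onto something cell-like. Closure under cones is handled by the standard "one-sided twisted complex" bookkeeping, but closure under retracts is the subtle point: one must use the positivity hypotheses on $A$ (non-negative grading, finite-dimensionality in each degree, semisimple $A^0$ with the differential trivial there) to run a minimality/Nakayama-type argument, stripping off the bottom cell and inducting on cell length, to show that a homotopy idempotent on a finite cell module splits off another finite cell module. This is exactly the place where the hypothesis that $A$ is positive (rather than an arbitrary DG algebra) is essential, and it is where I would concentrate the detailed work; everything else is a formal application of the machinery of compactly generated triangulated categories recalled in \cite{Ke1, KeICM}.
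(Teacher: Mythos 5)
Your strategy is the right one, and it is in fact the strategy of the source this paper leans on: the paper gives no argument of its own for this proposition but cites \cite{SchPos}, so the comparison is with that reference. Steps 1, 3 and 4 of your outline are correct and standard: $A$, hence each DG summand $P_i=Ae_i$, is compact because $\HOM_A(A,N)\cong N$ commutes with coproducts; cones and shifts of finite cell modules are again finite cell modules (concatenate the filtrations); the $P_i$ compactly generate $\mc{D}(A)$ since vanishing of $\Hom_{\mc{D}(A)}(A[l],N)$ for all $l$ forces $N$ to be acyclic; and the Neeman--Ravenel--Thomason theorem identifies $\mc{D}^c(A)$ with the smallest thick subcategory containing the $P_i$.

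The genuine gap is exactly at the point you flag and then defer: closure of $\overline{\mc{F}}(A)$ under retracts. Since $\overline{\mc{F}}(A)$ is by definition only the isomorphism closure of $\mc{F}(A)$ in $\mc{D}(A)$, the Thomason-type argument yields $\mc{D}^c(A)\subseteq\overline{\mc{F}}(A)$ only after one proves that a direct summand in $\mc{D}(A)$ of a finite cell module is again isomorphic to a finite cell module. For a general DG algebra this fails: the thick closure of the $P_i[l]$ is the idempotent completion of their triangulated hull, and that hull need not be idempotent complete, which is why compact objects are in general only summands of finite cell modules. So this step is not a formal verification but the entire mathematical content of the proposition, and it is the only place the positivity hypotheses (non-negative grading, finite dimensionality in each degree, semisimple $A^0$ with $d|_{A^0}=0$) enter. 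Your proposal names the correct tool --- replace a finite cell module by a minimal model whose differential has components in the augmentation ideal, lift a homotopy idempotent to a strict idempotent cell by cell using semisimplicity of $A^0$ and the positivity of the grading, and split it --- but does not carry this out; note also that your Step 2 as written is circular, deducing idempotent completeness of $\overline{\mc{F}}(A)$ from a retract-closure statement that is itself the thing to be proved. As it stands, your argument establishes only the easy inclusion $\overline{\mc{F}}(A)\subseteq\mc{D}^c(A)$, which the paper already records before the proposition; the missing splitting lemma is precisely what is proved in \cite{SchPos}.
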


In other words, over a positive DG algebra, any compact module is quasi-isomorphic to a finite cell module. The proof of the Proposition can be found in \cite{SchPos}. Applying the usual Grothendieck group construction to the triangulated category $\mc{D}^c(A)$, we obtain the DG Grothendieck group of a DG algebra $A$, which we denote by $K_0(A)$. The abelian group stucture on $K_0(A)$ arises from the module structure $K_0(\Bbbk)\otimes K_0(A)\to K_0(A)$ induced by the categorical action of $\mc{D}(\Bbbk)$ on $\mc{D}^c(A)$.

\paragraph{Resolutions.} Let $M$ be a DG module. By a \emph{DG resolution} of $M$ we mean a quasi-isomorphism of DG modules $P\lra M$ such that $P$ is cofibrant. It is known that resolutions for DG modules always exist \cite[Theorem 3.1]{Ke1}, although they are usually not unique. However, it is not true in general that any DG module has a resolution by a finite cell module. Proposition \ref{prop-characterization-compactness} says that, over a positive DG algebra, this happens precisely when the module is compact in the derived category.

\begin{ntn}\label{ntn-finite-cell-mod}In what follows, we introduce a convenient way to rewrite finite cell modules. Let $M$ be a DG module with a finite increasing filtration $F^\bullet$ ($\bullet\in \Z$), with $F^i/F^{i-1}:= P_i$. Then we will write this filtered module schematically as a direct sum of its associated graded, and put an arrow from $P_i$ to $P_j$ decorated by a map $\phi$ if $\dif_M(p_i)$ projects onto $\phi(p_i)\in P_j$ (necessarily $j < i$), where $p_i$ is a lift of an element of $P_i$ inside $M$. For instance, the following diagram stands for a filtered DG module
\[
\xymatrix{ P_n \ar[r]^-{\phi_n} & P_{n-1}\ar[r]^-{\phi_{n-1}} \ar[r] & \cdots\ar[r]^-{\phi_{m+2}} & P_{m+1} \ar[r]^-{\phi_{m+1}}& P_m }
\]
with the filtration implicitly understood as $F^i=\sum_{k=n}^iP_k$. On each $F^i$, the differential structure is defined inductively by the composition
\[
\dif_{F^i}: P_j \lra P_j+P_{j-1}\subset F^i, \ \ \ \ p_j\mapsto (\dif_{P_j}(p_j), \phi_j(p_j)),
\]
where $m\leq j \leq i$.
\end{ntn}

Alternatively, one can regard the original filtration on $M$ as obtained from collapsing the bigrading of a bigraded $A$-module $M\cong \bigoplus_{i,j\in \Z}P_j^i$ into a single grading, which will always be referred to as the \emph{$q$-grading}, and take $F^j:=\sum_{k\geq j, i\in \Z}P^i_k$. Note that $\dif_M$ does not necessarily preserve the bigrading.

For instance, in this notation, the filtered DG module in Example \ref{eg-finite-cofibrant-mod} would be represented in this notation by
\[
0\lra A \stackrel{\cdot a_0}{\lra} A \lra 0.
\]


\begin{lemma}\label{lemma-cone-construction}
Let $\phi:M\lra N$ be a map of DG modules over $A$. The cone of the morphism $\phi$ is isomorphic to the filtered DG module
\[
\xymatrix{ M[1]\ar[r]^-{\phi} & N \ ,}
\]
where $N$ sits in homological degree zero. The cocone of $\phi$ is isomorphic to the filtered module
\[
\xymatrix{ M  \ar[r]^-{\phi} &  N[-1] \ ,}
\]
where $M$ sits in homological degree zero.
\end{lemma}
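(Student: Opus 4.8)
The statement is the standard identification of the mapping cone and cocone of a DG module map with the explicit two-step filtered modules, phrased in the diagrammatic notation of Notation \ref{ntn-finite-cell-mod}. The plan is to write down the cone $\mathrm{Cone}(\phi)$ concretely, as an $A$-module equal to $N \oplus M[1]$ equipped with the differential
\[
\dif_{\mathrm{Cone}(\phi)}(n, m) = \bigl(\dif_N(n) + \phi(m),\ \dif_{M[1]}(m)\bigr),
\]
and then simply observe that this is exactly the filtered DG module produced by Notation \ref{ntn-finite-cell-mod} from the data ``$P_2 = M[1] \xrightarrow{\phi} P_1 = N$''. That is, with the two-step increasing filtration $F^1 = N \subset F^2 = \mathrm{Cone}(\phi)$, one has $F^1 \cong N$ in homological degree zero, $F^2/F^1 \cong M[1]$, and the differential on a lift of an element $m \in M[1]$ projects onto $\phi(m) \in N$, which is precisely the decorated arrow in the diagram. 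The cocone statement is dual: $\mathrm{Cocone}(\phi) = M \oplus N[-1]$ with differential $(m, n) \mapsto (\dif_M(m),\ \dif_{N[-1]}(n) + \phi(m))$ up to sign (irrelevant in characteristic $2$), filtered by $F^1 = N[-1] \subset F^2 = \mathrm{Cocone}(\phi)$, so that $M$ sits in homological degree zero and a lift of $m \in M$ has differential projecting onto $\phi(m) \in N[-1]$.

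First I would recall the definition of the cone of a morphism in $\mc{C}(A)$ as the DG module whose underlying graded $A$-module is $N \oplus M[1]$ and whose differential is the one displayed above, and check the two points that need checking: that $\dif^2 \equiv 0$ (which uses $\dif_N^2 = 0$, $\dif_M^2 = 0$, and the fact that $\phi$ is a chain map, $\dif_N \phi = \phi \dif_M$, noting that in characteristic $2$ there are no sign subtleties in the shift $[1]$), and that $\dif$ is compatible with the $A$-action, i.e.\ satisfies the twisted Leibniz rule, which is immediate since both $\dif_N$, $\dif_{M[1]}$ and $\phi$ are $A$-module maps in the appropriate sense. Then I would match this against the recipe in Notation \ref{ntn-finite-cell-mod}: the collection $\{P_2 = M[1],\ P_1 = N\}$ with the single arrow $\phi_2 = \phi \colon P_2 \to P_1$ and the inductively defined differential $\dif_{F^2}(p_2) = (\dif_{P_2}(p_2), \phi(p_2))$ reproduces exactly the formula for $\dif_{\mathrm{Cone}(\phi)}$ on the $M[1]$ summand, while on the $N$ summand it is just $\dif_N$; hence the two DG modules are equal on the nose, and in particular isomorphic. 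For the cocone I would either repeat the same bookkeeping with the roles of $M$ and $N[-1]$ interchanged, or deduce it formally from the cone statement using the rotation $\mathrm{Cocone}(\phi) \cong \mathrm{Cone}(\phi)[-1]$ together with the observation that shifting the diagram $M[1] \xrightarrow{\phi} N$ by $[-1]$ gives $M \xrightarrow{\phi} N[-1]$, with the homological-degree-zero object now being $M$ rather than $N$.

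This is essentially a definition-unwinding lemma, so there is no real obstacle; the only point requiring any care is making sure that the diagrammatic conventions of Notation \ref{ntn-finite-cell-mod} (which way the arrows point, which object is in homological degree zero, and how the $q$-grading collapses to a single homological grading) are matched precisely to the standard cone and cocone differentials, and in particular that the shift $[1]$ on $M$ is placed on the correct summand so that the filtration step $F^2/F^1$ is $M[1]$ and not $M[1]$ in the other slot. Working in characteristic $2$ removes all sign ambiguities, so once the bookkeeping of degrees and the filtration is set up correctly, the verification of $\dif^2 = 0$ and of the Leibniz compatibility is routine and the identification is immediate.
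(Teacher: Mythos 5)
Your proof is correct and follows essentially the same route as the paper, which simply notes that the statement follows from the definitions of cone and cocone (citing \cite[Definition 2.14]{KQ}); your write-up just makes the definition-unwinding and the matching with Notation \ref{ntn-finite-cell-mod} explicit. No gaps.
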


For later convenience, we will simply write the (co)cone of $\phi$ as
\[
M[1] \xrightarrow{\phi} N \quad \quad \left(\textrm{resp.}~M \xrightarrow{\phi}N[-1]\right).
\]

\begin{proof}
This follows from the definition of the cone and cocone. See \cite[Definition 2.14]{KQ}.
\end{proof}

\begin{lemma}\label{lemma-ses-gives-p-DG-resolution} \cite[Lemma 3.11]{QiSussan1} Let $0\lra K \stackrel{\phi}{\lra} L  \stackrel{\psi}{\lra} M \lra 0$ be a short exact sequence of graded modules over the smash product algebra $A_\dif$. Then the filtered DG module
\begin{subequations}
\begin{equation}\label{eqn-ses-lead-to-acyclic-pdg-mod-1}
 K[2] \stackrel{\phi}{\lra} L[1]  \stackrel{\psi}{\lra} M
\end{equation}
\end{subequations}
is acyclic.
\end{lemma}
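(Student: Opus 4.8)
The plan is to recognize the filtered DG module in~\eqref{eqn-ses-lead-to-acyclic-pdg-mod-1} as an iterated mapping cone and to collapse it using exactness of $0\to K\xrightarrow{\phi}L\xrightarrow{\psi}M\to 0$. Write $T$ for the DG module $K[2]\xrightarrow{\phi}L[1]\xrightarrow{\psi}M$; by Notation~\ref{ntn-finite-cell-mod} this is $T=K[2]\oplus L[1]\oplus M$ with differential $(x,y,z)\mapsto(\dif x,\ \phi(x)+\dif y,\ \psi(y)+\dif z)$, all signs being irrelevant since $\Bbbk$ has characteristic $2$, and $\phi,\psi$ being chain maps because they are $A_\dif$-module maps. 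Regrouping the first two rows using Lemma~\ref{lemma-cone-construction}, the outer cone structure exhibits $T$ as $\mathrm{Cone}(c)$, where $c\colon\mathrm{Cone}(\phi)\to M$ is the map that is $\psi$ on the $L$-summand and $0$ on the $K$-summand of $\mathrm{Cone}(\phi)=\bigl(K[1]\xrightarrow{\phi}L\bigr)$.

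Next I would check that $c$ is a quasi-isomorphism. Since the sequence is short exact, $\phi$ is injective and $\psi$ induces an isomorphism of DG modules $L/\phi(K)\xrightarrow{\ \sim\ }M$; the map $c$ factors as the canonical projection $\mathrm{Cone}(\phi)\twoheadrightarrow L/\phi(K)$ followed by this isomorphism. That the projection $\mathrm{Cone}(\phi)\to L/\phi(K)$ is a quasi-isomorphism for a degreewise-injective chain map is standard: its kernel is $K[1]\oplus\phi(K)$, and under the identification $\phi(K)\cong K$ the differential becomes $(x,x')\mapsto(\dif x,\ x+\dif x')$, i.e.\ the kernel is $\mathrm{Cone}(\mathrm{id}_K)$, which is contractible; the long exact cohomology sequence then forces $c$ to be a quasi-isomorphism.

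Finally, the mapping cone of a quasi-isomorphism has vanishing cohomology, so $T=\mathrm{Cone}(c)$ is acyclic, which is the claim. An essentially equivalent route avoids the cone-regrouping: filter $T$ by the DG submodules $M\subset M\oplus L[1]\subset T$, where the first two rows form $\mathrm{Cone}(\psi)$ and the quotient is $K[2]$ with its internal differential; since $\psi$ is surjective one has $\mathrm{Cone}(\psi)\simeq(\ker\psi)[1]\cong K[1]$ via $\phi$, and a direct computation of the connecting homomorphism $H^i(K[2])\to H^{i+1}(\mathrm{Cone}(\psi))$ in the associated long exact sequence shows it sends a cocycle $x$ to the class of $(\phi(x),0)$, hence is the isomorphism induced by $\phi\colon K\xrightarrow{\sim}\ker\psi$; the long exact sequence then yields $H^\bullet(T)=0$.

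The only real obstacle is bookkeeping: lining up the homological-degree shifts in the iterated-cone description of $T$ and verifying that the induced map is exactly the expected quasi-isomorphism $c$ (or, in the alternative argument, that the connecting homomorphism is $\phi_\ast$ rather than some other map landing in the same group). In characteristic $2$ there are no sign subtleties, so once the shifts are matched the argument is immediate.
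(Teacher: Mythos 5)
Your argument is correct. Note, however, that the paper does not prove this lemma internally at all: it is quoted with a citation to \cite[Lemma 3.11]{QiSussan1} (where it is established in the more general $p$-DG/hopfological setting), so there is no in-paper proof to match step by step. Your verification is a complete, self-contained substitute for the $p=2$ case: you correctly unpack Notation \ref{ntn-finite-cell-mod} to get the total differential $(x,y,z)\mapsto(\dif x,\ \phi(x)+\dif y,\ \psi(y)+\dif z)$ (whose square vanishes precisely because $\psi\phi=0$), identify the filtered module as $\mathrm{Cone}(c)$ with $c\colon\mathrm{Cone}(\phi)\to M$ via Lemma \ref{lemma-cone-construction}, and show $c$ is a quasi-isomorphism using injectivity of $\phi$ and the isomorphism $L/\phi(K)\cong M$; the fact that $\phi,\psi$ are chain maps because they are $A_\dif$-linear is exactly the right justification. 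Your alternative filtration $M\subset M\oplus L[1]\subset T$ with the connecting homomorphism identified as $\phi_\ast$ is equally valid and arguably closer in spirit to how such statements are checked in the cited reference. What the citation buys the authors is brevity and uniformity over all $p$; what your direct argument buys is transparency at $p=2$, and the degree bookkeeping you flag as the only delicate point is indeed handled consistently with the paper's shift conventions ($M[i]^j=M^{i+j}$, differentials of degree $+1$).
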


\paragraph{Smooth DG algebras.}
We now recall the notion of \emph{smooth} DG algebras. Throughout, for simplicity, we will assume that $A$ is a finite-dimensional DG algebra.

\begin{defn}\label{def-hopfological-finite-algebra}The DG algebra $A$ is called \emph{left (resp.~right) smooth} if all simple left (resp.~right) DG modules are compact in the derived category of left (resp.~right) DG modules.
\end{defn}

Given a finite-dimensional DG algebra $A$, we have a strictly full subcategory $\mc{D}^f(A)\subset \mc{D}(A)$ which consists of DG left (resp.~right) modules that are quasi-isomorphic to finite-dimensional left (resp.~right) DG-modules. The natural inclusion of $\mc{D}^c(A)$ into $\mc{D}(A)$ factors through $\mc{D}^f(A)$, since $A$ is finite-dimensional.

The following result is well-known. For completeness, we record a reference in which the result is proven in the context of hopfological algebra.

\begin{lemma}\label{lemma-equivalence-DC-DF} \cite[Lemma 3.15]{QiSussan1} If $A$ is left (resp.~right) smooth, then the natural inclusion functor $\mc{D}^c(A)\lra \mc{D}^f(A)$ is an equivalence of triangulated categories of left (resp.~right) DG modules.
\end{lemma}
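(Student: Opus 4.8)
\textbf{Proof proposal for Lemma \ref{lemma-equivalence-DC-DF}.}

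The plan is to show both that $\mc{D}^c(A) \subseteq \mc{D}^f(A)$ (which is essentially automatic since $A$ is finite-dimensional) and that $\mc{D}^f(A) \subseteq \mc{D}^c(A)$ using the smoothness hypothesis. For the first inclusion, recall that the natural inclusion $\mc{D}^c(A) \hookrightarrow \mc{D}(A)$ already factors through $\mc{D}^f(A)$: since $A$ is finite-dimensional, the free module $A$ is a compact object that is quasi-isomorphic to a finite-dimensional DG module (namely itself), and by Proposition \ref{prop-characterization-compactness} every compact object is built from finitely many shifts of the projectives $P_i = Ae_i$ by taking cones, hence is quasi-isomorphic to a finite cell module, which is finite-dimensional in each degree and bounded, hence finite-dimensional. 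So $\mc{D}^c(A) \subseteq \mc{D}^f(A)$ as full triangulated subcategories, and the inclusion functor in the statement is well-defined and fully faithful.

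The substance is the reverse inclusion. I would argue that $\mc{D}^f(A)$ is the smallest thick (i.e.\ triangulated and closed under direct summands) subcategory of $\mc{D}(A)$ containing the simple DG modules $L_i$. Indeed, any finite-dimensional DG module $M$ admits a finite filtration by DG submodules whose subquotients are simple DG modules — take, for instance, a filtration refining the radical filtration, noting that the differential $d_M$ raises degree and hence is compatible with a suitable filtration by submodules; iterating cone triangles $F^{k-1} \to F^k \to F^k/F^{k-1}$ expresses $M$ as an iterated extension of simples. Hence $M$ lies in the triangulated subcategory generated by $\{L_i\}$. Now invoke the smoothness hypothesis: each simple left DG module $L_i$ is compact, i.e.\ $L_i \in \mc{D}^c(A)$. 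Since $\mc{D}^c(A)$ is a thick subcategory of $\mc{D}(A)$ (compact objects are closed under cones, shifts, and direct summands), it contains the triangulated subcategory generated by the $L_i$, hence contains all of $\mc{D}^f(A)$. Combining the two inclusions gives $\mc{D}^c(A) = \mc{D}^f(A)$ as subcategories of $\mc{D}(A)$, so the inclusion functor is an equivalence.

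The main obstacle — or rather the one point requiring genuine care rather than formal nonsense — is the claim that every finite-dimensional DG module is an iterated extension of simple DG modules \emph{in the DG sense}, i.e.\ through a filtration by DG submodules (stable under $d_M$), not merely under the underlying ungraded algebra. The resolution is that one can always filter $M$ by $d_M$-stable, $A$-stable subspaces with simple-as-DG-module subquotients: working degree by degree and using that $A$ is positive with semisimple $A^0$ on which $d$ acts trivially, one builds such a filtration; alternatively one cites that for a positive DG algebra the abelian category $A_d\dmod$ restricted to finite-dimensional objects is a finite-length category whose simples are the $L_i$. Once that structural fact is in hand, everything else is a formal consequence of the definitions of compactness and of the thick-subcategory closure properties, together with Proposition \ref{prop-characterization-compactness}. (A cleaner packaging, which I would likely adopt in the write-up, is: $\mc{D}^c(A)$ is generated as a thick subcategory by $A$, and $\mc{D}^f(A)$ is generated as a thick subcategory by the $L_i$; smoothness says $L_i \in \mc{D}^c(A)$, and finite-dimensionality of $A$ says $A \in \mc{D}^f(A)$, so the two thick subcategories coincide.)
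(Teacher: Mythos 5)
Your argument is correct and is essentially the standard proof that the paper itself does not spell out but delegates to the citation \cite[Lemma 3.15]{QiSussan1} (specialized to $p=2$): compact objects are quasi-isomorphic to finite cell modules by Proposition \ref{prop-characterization-compactness}, hence finite-dimensional since $A$ is, while every finite-dimensional DG module admits a finite composition series by DG submodules whose subquotients are shifts of the simples, so smoothness together with the closure of $\mc{D}^c(A)$ under shifts, cones, and isomorphisms yields $\mc{D}^f(A)\subseteq \mc{D}^c(A)$. One cosmetic point: over a positive DG algebra the simple graded $A_\dif$-modules are the shifts $L_i[j]$ rather than the $L_i$ themselves (and your appeal to the radical filtration needs the refinement to be by $A_\dif$-submodules, exactly as you note), but this is harmless since triangulated subcategories are shift-closed.
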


\paragraph{Background on $A_\infty$-algebras.}
We quickly review some basic material on $A_\infty$-algebras over a field of characteristic two, following the references \cite{KeIntro} and \cite{KeCont}.

An $A_\infty$-algebra $A$ over a field $\Bbbk$ is a $\Z$-graded vector space over $\Bbbk$ with operations $m_n \colon A^{\otimes n} \rightarrow A$ of degree $2-n$ satisfying relations
\[
\sum m_u(1^{\otimes r} \otimes m_s \otimes 1^{\otimes t})=0 ,
\]
where the sum is over all decompositions $n=r+s+t$ and $u=r+1+t$.

A right $A_\infty$-module over $A$ is a $\Z$-graded vector space $M$ with operations $m_n \colon M \otimes A^{\otimes n-1} \rightarrow M$ of degree $2-n$ satisfying relations similar to those of an $A_\infty$-algebra.  For details see \cite[Section 4.2]{KeIntro}.  A left $A_\infty$-module is defined in a similar way.

A morphism of right $A_\infty$-modules over $A$ is sequence of maps 
$ f_n \colon M \otimes A^{\otimes n-1} \rightarrow N$ of degree $1-n$, such that for each $n \geq 1$, there is a relation
\[
\sum f_u \circ (1^{\otimes r} \otimes m_s \otimes 1^{\otimes t})
=
\sum m_u(f_r \otimes 1^{\otimes s}) ,
\]
where the left hand sum is over all decompositions $n = r+s+t, r, t \geq 0, s \geq 1$, 
$u = r + 1 + t$, and the right hand sum is over all decompositions
$n = r + s, r \geq 1, s \geq 0$, and $ u = 1 + s$.
The composition of morphisms is defined in \cite[Section 4.2]{KeIntro}.

An $A_{\infty}$-bimodule $M$ is a $\Z$-graded vector space with operations $m_{i,j} \colon A^{\otimes i-1} \otimes M \otimes A^{\otimes j-1} \rightarrow M $ of degree $3-i-j$ satisfying certain relations.  For more details see \cite[Section 3]{KeCont}.  One can form a tensor product of a right $A_{\infty}$-module $M$ and a left $A_\infty$-module $N$ over an $A_\infty$-algebra $A$, which we denote as $M \boxtimes_{A} N$.  Again see \cite[Section 3]{KeCont} for details.






\section{The DG algebra \texorpdfstring{$A_n^{!}$}{A!}}
\label{sec:dualzigzagalgebra}

\paragraph{The quiver presentation.} Let $\Bbbk$ be a field of characteristic $2$. Let $n$ be a natural number greater than or equal to two, and $Q_n$ be the following quiver:
\begin{equation}\label{quiver-Q}
\xymatrix{
 \overset{1}{\circ} &
 \cdots \ltwocell{'}&
 \overset{i-1}{~\circ~}\ltwocell{'}&
 \overset{i}{\circ}\ltwocell{'}&
 \overset{i+1}{~\circ~}\ltwocell{'}&
 \cdots \ltwocell{'}&
 \overset{n}{\circ}\ltwocell{'}
 }
\end{equation}
Let $\Bbbk Q_n$ be the path algebra associated to $Q_n$ over the ground field $\Bbbk$. Following Khovanov-Seidel \cite{KS}, we use, for instance, the symbol $(i|j|k)$, where $i,j,k$ are vertices of the quiver $Q_n$, to denote the path which starts at a vertex $i$, then goes through $j$ (necessarily $j=i\pm 1$) and ends at $k$.
The composition of paths is given defined by
\begin{equation}
(i_i|i_2|\cdots|i_r)\cdot (j_1|j_2|\cdots|j_s)=
\left\{
\begin{array}{ll}
(i_i|i_2|\cdots|i_r|j_2|\cdots|j_s) & \textrm{if $i_r=j_1$,}\\
0 & \textrm{otherwise,}
\end{array}
\right.
\end{equation}
where $i_1,\dots, i_r$ and $j_1, \dots, j_s$ are sequences of neighboring vertices in $Q_n$.

\begin{defn}\label{def-algebra-An}The algebra $A_n^!$ is the quotient of the path algebra $\Bbbk Q_n$ by the relations
\[
(i|i-1|i)=(i|i+1|i)~~ (i=2,\dots, n-1),~~ (1|2|1)=0.
\]
This is a graded algebra where the degree of the generators are given as follows. The degree of $(i|i+1)$ is 1, the degree of $(i|i-1)$ is 0, and the degree of $(i)$ is 0.
\end{defn}

\begin{rmk}
In the literature, the grading on $A_n^!$ is usually taken to be the path length grading.  

The algebra $A_n^!$ is Morita equivalent to a maximally singular block of category $\mathcal{O}(\mathfrak{gl}_n)$ and is isomorphic to a particular Webster algebra.  For more details see \cite{QiSussan1}.

The algebra $A_n^!$ is Koszul, whose quadratic dual is isomorphic to the algebra $A_n$ considered in \cite{KS}.
\end{rmk}

The quiver algebra $A_n^!$ has a differential $\dif$, given on the path generators by
\[
\dif_{}(i|i+1)=(i|i+1|i|i+1), \quad \quad \dif_{}(i|i-1)=0,
\]
and extended to the whole algebra via the Leibnitz rule. Sometimes for convenience, we will write the loop $(i|i+1|i)=(i|i-1|i)$ at the vertex $i$ simply as $c_i$, so that $\dif(i|i+1)= c_i(i|i+1)= (i|i+1)c_{i+1}$.

In the rest of the paper, we will be exclusively focusing on a family of DG algebras that are positive (Definition \ref{defn-positive-p-DGA}).  Now let $A$ be a positive DG algebra, and $\{e_i|i=1,\dots, n\}$ be a complete list of pairwise non-isomorphic indecomposable idempotents in $A$. Define $P_i$ (resp. $_iP$) to be the DG module $A\cdot e_i$ (resp. $e_i\cdot A$), and $L_i:=P_i/A^{\prime}\cdot P_i$ (resp. $_iL:={_iP}/{_iP\cdot A^{\prime}}$), where $A^{\prime}=\oplus_{i>0}A^i$ is the augmentation ideal of $A$. Then $A$ is left (resp. right) smooth if and only if all $L_i$'s (resp. $_iL$'s), $i=1,\dots,n$ are compact in the derived category. Furthermore, the natural categorical pairing, which exists for any finite-dimensional DG algebra,
\begin{equation}\label{eqn-categorical-pairing}
\RHOM_A(-,-) : \mc{D}^c(A) \times \mc{D}^f(A) \lra \mc{D}(\Bbbk)
\end{equation}
gives rise to a pairing on $\mc{D}^c(A)$ via the equivalence $\mc{D}^c(A)\lra \mc{D}^f(A)$ of Lemma \ref{lemma-equivalence-DC-DF}. It descends to a semi-linear non-degenerate pairing on the Grothendieck group
\begin{equation}\label{eqn-K-group-pairing}
[\RHOM_A(-,-)] : K_0(A) \times K_0(A) \lra \mathbb{Z},
\end{equation}
which is perfect if all $L_i$'s ($i=1,\dots, n$) are absolutely irreducible.

\begin{lemma} \label{lemma-DG-ny-resolution}
\begin{enumerate}
\item[(i)] For $i=1,\ldots,n-1$, the left DG module $L_i$ is quasi-isomorphic to the finite cell module $\mathbb{P}(L_i)$
\begin{equation}\label{pdgresLi1}
\begin{gathered}
\xymatrix{   & P_{i-1} \ar[dd]^{_{(i-1|i|i+1)}}\ar[dr]^-{(i-1|i)} & \\
  P_i [1] \ar[ur]^-{(i|i-1)}  \ar[dr]_-{(i|i+1)} && P_i \ ,\\
& P_{i+1} [1] \ar[ur]_-{(i+1|i)} & }
\end{gathered}
\end{equation}
The right DG module ${}_iL$ is quasi-isomorphic to the finite cell module $\mathbb{P}({}_iL)$
\begin{equation}\label{pdgresiL1}
\begin{gathered}
\xymatrix{   & {}_{i-1} P[1] \ar[dr]^-{(i|i-1)} & \\
  {}_i P [1] \ar[ur]^-{(i-1|i)}  \ar[dr]_-{(i+1|i)} &&{}_iP \ ,\\
& {}_{i+1} P\ar[uu]^{_{(i-1|i|i+1)}} \ar[ur]_-{(i|i+1)} & }
\end{gathered}
\end{equation}
\item[(ii)] The left DG module $L_n$ is quasi-isomorphic to the finite cell module $\mathbb{P}(L_n)$
\begin{equation}\label{pdgresLn}
\begin{gathered}
\xymatrix{ 
  & P_2  \ar[r]^{(2|3)}  \ar[dd]_{(2|1)} \ar[ddr] & \cdots \ar[r] & P_{n-2}  \ar[rr]^{(n-2|n-1)} \ar[dd]_{(n-2|n-3)} \ar[ddrr] && P_{n-1}  \ar[dr]^{(n-1|n)} \ar[dd]^{(n-1|n-2)}  &&  \\
  P_1  \ar[ur]^{(1|2)} \ar[dr] & & & & & &  P_n  \ ,\\
  & P_1 [-1 ] \ar[r]_{(1|2)} \ar[uur]| (.75) {(1|2|3)} & \cdots \ar[r] & P_{n-3} [ -1 ] \ar[rr]_{(n-3|n-2)} \ar[uurr]| (.65)  {(n-3|n-2|n-1)} & & P_{n-2} [-1] \ar[ur]_-{(n-2|n-1|n)} &&
}
\end{gathered}  
\end{equation}
where all the undecorated arrows are identity maps. The right DG module ${}_nL$ is quasi-isomorphic to the finite cell module $\mathbb{P}({}_nL)$
\begin{equation}\label{pdgresnL}
\begin{gathered}
\xymatrix{
 {}_{n-1}P[1]  \ar[rr]^{(n|n-1)}  && {}_n P  \ .
}
\end{gathered} 
\end{equation}
\end{enumerate}
\end{lemma}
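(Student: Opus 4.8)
The plan is to verify Lemma~\ref{lemma-DG-ny-resolution} by exhibiting each $\mathbb{P}(L_i)$ explicitly as a finite cell module built out of the projectives $P_j$ (resp.\ ${}_jP$) and checking two things: that the displayed arrows define a genuine differential $\dif_{\mathbb{P}(L_i)}$ (i.e.\ $\dif^2 \equiv 0$, compatibly with $\dif_{A_n^!}$), and that the augmentation map $\mathbb{P}(L_i) \to L_i$, which sends the rightmost $P_i$ (the one in homological degree zero) onto $L_i = P_i/A^{\prime}P_i$ and kills everything else, is a quasi-isomorphism. Since finite cell modules are cofibrant (stated in the excerpt, just after Definition~\ref{def-nice-p-dg-module}), establishing the quasi-isomorphism is exactly what is needed. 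Throughout, I would use Notation~\ref{ntn-finite-cell-mod}, so that each diagram literally \emph{is} the filtered module, with the differential on $F^i$ determined inductively by the decorating arrows; the condition $\dif^2\equiv 0$ then becomes a finite list of identities among compositions of path generators, each of which follows from the defining relations of $A_n^!$ in Definition~\ref{def-algebra-An} together with the formula $\dif(i|i+1) = c_i(i|i+1) = (i|i+1)c_{i+1}$, where $c_i = (i|i+1|i) = (i|i-1|i)$.

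The natural way to organize the quasi-isomorphism check is to build $\mathbb{P}(L_i)$ from short exact sequences of $A_\dif$-modules and apply Lemma~\ref{lemma-ses-gives-p-DG-resolution} (or Lemma~\ref{lemma-cone-construction} for cones) repeatedly. For part (i), the resolution \eqref{pdgresLi1} should be read as an iterated cone: the ``outer'' cell $P_i$ in degree zero has cofibrant replacement kernel built from $P_{i-1}$, $P_i[1]$, $P_{i+1}[1]$, and the arrows $(i|i\pm1)$ express the fact that the radical of $P_i$ is generated by the two paths $(i|i-1)$ and $(i|i+1)$; the remaining arrow $(i-1|i|i+1)$ (and the other composite maps) encode the next syzygy. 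Concretely I would show that the subquotient complex obtained by forgetting the $q$-grading computes $\Ext^\bullet_{\mc{D}(A_n^!)}(L_i, L_j)$ correctly — or more directly, that the total complex $(\mathbb{P}(L_i), \dif)$ has homology concentrated where $L_i$ has it — by a direct filtration-spectral-sequence argument, or by peeling off acyclic pieces of the form \eqref{eqn-ses-lead-to-acyclic-pdg-mod-1} one at a time. For part (ii), the module $L_n$ is the non-spherical one; the zigzag-shaped diagram \eqref{pdgresLn} is longer because the relation $(1|2|1)=0$ truncates the resolution on one end, and $\mathbb{P}(L_n)$ must be assembled from the projectives $P_1,\dots,P_n$ with the various length-two paths $(j{-}1|j|j{+}1)$ as connecting maps. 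The right-module statements \eqref{pdgresiL1}, \eqref{pdgresnL} are formally dual and can be obtained by applying the functor $(-)^{\mathrm{op}}$, i.e.\ by reading the quiver backwards; I would simply remark that the same computation applies verbatim with left replaced by right.

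The main obstacle I expect is \emph{bookkeeping rather than conceptual}: one must keep the homological grading, the internal $q$-grading (with the nonstandard degrees from Definition~\ref{def-algebra-An}, where $(i|i+1)$ has degree $1$ but $(i|i-1)$ has degree $0$), and the action of the differential $\dif$ all straight simultaneously, and verify that $\dif_{\mathbb{P}(L_i)}$ is homogeneous of degree one for the total grading and squares to zero. In particular the arrow labeled $(i-1|i|i+1)$ in \eqref{pdgresLi1} runs from $P_{i-1}$ down to $P_i$ across \emph{two} homological steps, and one has to check that this is forced: it is precisely the term that makes $\dif^2$ vanish, since $\dif$ applied to a lift of the generator of $P_{i-1}$ picks up both the ``horizontal'' component $(i-1|i)$ landing in the degree-zero $P_i$ and a correction term $(i-1|i|i+1)$ coming from $\dif(i-1|i) = 0$ versus $\dif$ of the composite; reconciling these is the one place where the internal differential on $A_n^!$ (not just the underlying graded algebra) genuinely enters. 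For \eqref{pdgresLn} the analogous check must be done at every vertex along the zigzag, and the asymmetry introduced by $(1|2|1)=0$ must be tracked carefully so that the resolution closes up correctly at the $P_1$ end. Once these finitely many identities are in hand, compactness of $L_i$ (hence left/right smoothness of $A_n^!$, used later) follows immediately from Proposition~\ref{prop-characterization-compactness}.
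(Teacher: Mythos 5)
Your overall strategy---read each diagram as a filtered module via Notation \ref{ntn-finite-cell-mod}, check that the decorating arrows are compatible with the internal differential of $A_n^!$ (i.e.\ $\dif^2\equiv 0$), and then verify that the augmentation onto the simple is a quasi-isomorphism by a filtration argument or by peeling off acyclic pieces as in Lemma \ref{lemma-ses-gives-p-DG-resolution}---is sound and is essentially the paper's route: the paper treats part (ii) exactly as ``straightforward to check'' plus the explicit filtration you describe, and disposes of part (i) by quoting it as the $p=2$ case of \cite{QiSussan1}, so your version is simply more self-contained. Two concrete points in your plan need repair, however. First, you misread the correction arrow in \eqref{pdgresLi1}: it goes from $P_{i-1}$ to $P_{i+1}[1]$, not to the degree-zero $P_i$ (right multiplication by $(i-1|i|i+1)\in e_{i-1}A_n^!e_{i+1}$ can only land in $P_{i+1}$), and the cancellation it effects sits elsewhere than you say. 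Applying $\dif$ twice to the generator of the top cell $P_i[1]$ produces, in the $P_{i+1}[1]$ component, the term $\dif(i|i+1)=(i|i+1|i|i+1)$, and this is cancelled precisely by the composite $(i|i-1)\cdot(i-1|i|i+1)=(i|i-1|i|i+1)$ through $P_{i-1}$, which equals it by the relation $(i|i-1|i)=(i|i+1|i)$; the two contributions to the $P_i$ component, $(i|i-1|i)$ and $(i|i+1|i)$, cancel by the same relation in characteristic two. This is the place where the internal differential of $A_n^!$ enters, as you anticipated, but with the target of the arrow as displayed in the lemma.

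Second, the right-module statements cannot be obtained ``verbatim'' by applying $(-)^{\mathrm{op}}$ or reading the quiver backwards: $A_n^!$ is not isomorphic to its opposite as a DG algebra, since reversing arrows exchanges the degree-one generators $(i|i+1)$, on which $\dif$ is nonzero, with the degree-zero generators $(i|i-1)$, on which $\dif$ vanishes, and it moves the relation $(1|2|1)=0$ to the wrong end of the quiver. The asymmetry is visible in the lemma itself: $\mathbb{P}(L_n)$ in \eqref{pdgresLn} is a zigzag built from all of $P_1,\dots,P_n$, while $\mathbb{P}({}_nL)$ in \eqref{pdgresnL} is a two-term complex, and in \eqref{pdgresiL1} the shifts sit on different cells than in \eqref{pdgresLi1}. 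So the right-module diagrams require their own (short, but separate) verification by the same technique---checking $\dif^2\equiv 0$ against $\dif(i|i+1)\neq 0$ and running the filtration argument again---rather than a duality remark; with that adjustment your plan goes through and, as you note, smoothness of $(A_n^!,\dif)$ then follows from Proposition \ref{prop-characterization-compactness}.
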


\begin{proof}
The first statement is the $p=2$ case of \cite{QiSussan1}.

It is straightforward to check that the module in \eqref{pdgresLn} is in fact a DG module.
This object $\mathbb{P}(L_n)$ could be filtered by starting from $P_n$ and adding modules moving down and to the left and then up.  More precisely:
\[
F_1=P_n , \quad
F_2=P_n \oplus P_{n-2} [-1] , 
\ldots , 
F_{2j-1} = F_{2j-2} \oplus P_{n-j+1} , \quad
F_{2j} = F_{2j-1} \oplus P_{n-j-1}[-1] ,  \ldots,
F_{2n-2} = \mathbb{P}(L_n) \ .
\]
\end{proof}

\begin{cor}
    The DG algebra $(A_n^!,d)$ is smooth. 
\end{cor}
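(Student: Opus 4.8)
The plan is to unwind the definition of smoothness and feed it directly into Lemma~\ref{lemma-DG-ny-resolution}. By Definition~\ref{def-hopfological-finite-algebra}, the DG algebra $(A_n^!,d)$ is left (resp.\ right) smooth exactly when every simple left (resp.\ right) DG module is compact in $\mc{D}(A_n^!)$. As recalled in the paragraph preceding Lemma~\ref{lemma-DG-ny-resolution}, for a positive DG algebra the simple DG modules are, up to isomorphism, the modules $L_i = P_i/A' P_i$ (resp.\ ${}_iL = {}_iP/{}_iP A'$), where $A'$ is the augmentation ideal and $\{e_i\}$ runs over a complete set of primitive idempotents; here these idempotents are precisely the vertex idempotents $(1),\dots,(n)$ of the quiver $Q_n$. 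So the first step is just this reduction: it suffices to show that $L_1,\dots,L_n$ and ${}_1L,\dots,{}_nL$ are all compact.

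Next I would observe that each of these $2n$ modules is quasi-isomorphic to a \emph{finite cell} module, which is exactly what Lemma~\ref{lemma-DG-ny-resolution} provides: part (i) gives the finite cell resolutions $\mathbb{P}(L_i)\to L_i$ and $\mathbb{P}({}_iL)\to {}_iL$ for $i=1,\dots,n-1$ (with the terms involving the non-existent $P_0$ understood to be absent when $i=1$), and part (ii) does the same for $i=n$. The only bookkeeping point worth verifying here is that parts (i) and (ii) together exhaust the list of simple left and right DG modules of $A_n^!$ — they do, since there is exactly one simple per vertex of $Q_n$ on each side, and the degenerations at the two ends of the $A_n$-type quiver are accounted for.

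Finally I would invoke the fact recorded just before Proposition~\ref{prop-characterization-compactness}, namely that every finite cell module lies in $\overline{\mc{F}}(A_n^!)\subseteq \mc{D}^c(A_n^!)$, i.e.\ is compact (concretely, a finite cell module is built from the modules $P_i = A_n^! e_i$, each a direct summand of the rank-one free module $A_n^!$ and hence compact, by finitely many shifts and mapping cones, and $\mc{D}^c(A_n^!)$ is a triangulated subcategory of $\mc{D}(A_n^!)$); alternatively one may cite Proposition~\ref{prop-characterization-compactness} directly. Since compactness is a property of the isomorphism class in $\mc{D}(A_n^!)$, it follows that every $L_i$ and every ${}_iL$ is compact, so $(A_n^!,d)$ is both left and right smooth. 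There is essentially no genuine obstacle here — the corollary is an immediate consequence of Lemma~\ref{lemma-DG-ny-resolution} together with the characterization of compact modules — and the only care required is the two small checks flagged above: that the listed resolutions cover all simple DG modules on both sides, and that $A_n^!$ fits the positive DG algebra framework so that Proposition~\ref{prop-characterization-compactness} applies.
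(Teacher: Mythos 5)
Your proposal is correct and follows essentially the same route as the paper: the paper's proof likewise observes that Lemma \ref{lemma-DG-ny-resolution} exhibits all simple left and right DG modules as (quasi-isomorphic to) finite cell modules, hence compact, which is exactly the definition of smoothness. Your additional bookkeeping — that the $L_i$, ${}_iL$ exhaust the simples and that finite cell modules are compact via Proposition \ref{prop-characterization-compactness} — is just a fuller spelling-out of the same argument.
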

\begin{proof}
    The lemma shows that simple left and right DG modules over $A_n^!$ are finite-cell, and thus are compact objects in the derived category. 
\end{proof}

\paragraph{Temperley-Lieb action.} Now we recall a categorical Temperley-Lieb algebra action on $\mc{D}^c(A_n^!)$ constructed in \cite{QiSussan1}. 

\begin{defn}\label{defn-cup-cap-functors}  Let $i$ be an integer in $\{1,\dots, n-1\}$.
\begin{enumerate}
\item[(i)] The \emph{$i$th cup functor} is the functor
\[
\cup_i \colon \mathcal{D}(\Bbbk) \lra \mathcal{D}(A_n^!) , \quad \quad \cup_i(V) : = L_i \otimes V.
\]

\item[(ii)] The \emph{$i$th cap functor} is given by
\[
\cap_i \colon \mathcal{D}(A_n^!) \lra  \mathcal{D}(\Bbbk) , \quad \quad \cap_i(M) : = {}_i L \otimes^\mathbf{L}_{A_n^!} M.
\]
\end{enumerate}
\end{defn}

Thanks to Lemma \ref{lemma-DG-ny-resolution}, the above functors interacting between $\mc{D}(A_n^!)$ and $\mc{D}(\Bbbk)$ send compact objects to compact objects. Furthermore, there is a functor isomorphism
\begin{equation}\label{eqn-cap-RHOM}
    \cap_i(M)= {}_iL\otimes_{A_n^!}^{\mathbf{L}}(M) \cong \RHOM_{A_n^!}(L_i,M)
\end{equation}
In turn, this allows one to define, for each $i\in \{1,\dots, n-1\}$, an endo-functor
\begin{equation}\label{eqn-def-TL-generator}
\mf{U}^!_i = \cup_i \circ \cap_i : \mc{D}^c(A_n^!) \lra \mc{D}^c(A_n^!),
\end{equation}
which is usually diagrammatically presented as 
\[
\mf{U}^!_i=
\begin{DGCpicture}[scale=0.8]
\DGCstrand(-0.5,0)(-0.5,2)[$^1$`$\empty$]
\DGCstrand(0.25,0)(0.25,2)[$^2$`$\empty$]
\DGCstrand(1,0)(1,0.25)(2,0.25)(2,0)/d/[$^i$`$^{i+1}$]
\DGCstrand/d/(1,2)(1,1.75)(2,1.75)(2,2)
\DGCstrand(3.5,0)(3.5,2)[$^{n-1}$`$\empty$]
\DGCstrand(2.75,0)(2.75,2)[$^n$`$\empty$]
\DGCcoupon*(0.45,0.5)(0.95,1.5){$\dots$}
\DGCcoupon*(2.15,0.5)(2.65,1.5){$\dots$}
\end{DGCpicture}
\ .
\]
This diagrammatic notation explains the names ``cups'' and ``caps'' introduced earlier.

\begin{thm}\cite[Theorem 5.10]{QiSussan1}
\label{thm-TL-action}
For $ i=1, \ldots, n-1 $, the functors $ \mf{U}^!_i $ are self-biadjoint, and they enjoy the following functor-isomorphism relations.
\begin{enumerate}
\item[(i)] $ \mf{U}^!_i \circ \mf{U}^!_i \cong \mf{U}^!_i  \oplus \mf{U}^!_i[1]  $,
\item[(ii)] $ \mf{U}^!_i \circ \mf{U}^!_j \cong \mf{U}^!_j \circ \mf{U}^!_i $ for $ |i-j|>1 $,
\item[(iii)] $ \mf{U}^!_i \circ \mf{U}^!_j \circ \mf{U}^!_i \cong \mf{U}^!_i $ for $ |i-j|=1 $.
\end{enumerate}
\end{thm}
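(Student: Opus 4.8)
### Proof proposal for Theorem \ref{thm-TL-action}

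The plan is to verify each functor isomorphism by replacing the abstractly-defined composition $\mf{U}^!_i = \cup_i \circ \cap_i$ with an explicit DG bimodule and computing. By the identification \eqref{eqn-cap-RHOM}, we have $\mf{U}^!_i(M) \cong L_i \otimes_{\Bbbk} \RHOM_{A_n^!}(L_i, M)$, so $\mf{U}^!_i$ is given by tensoring with the DG bimodule $L_i \otimes_{\Bbbk} \mathbb{P}({}_iL)$, where $\mathbb{P}({}_iL)$ is the explicit finite cell resolution from \eqref{pdgresiL1}. The first step is therefore to fix, once and for all, these cofibrant replacements and record the resulting bimodule $B_i := L_i \otimes_{\Bbbk} \mathbb{P}({}_iL)$ as an explicit one-sided twisted complex of $(A_n^!,A_n^!)$-bimodules of the form $P_j \otimes {}_kP$. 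Self-biadjointness then follows formally: since each $\cup_i$ is left adjoint to $\cap_i$ by construction of the derived tensor/Hom adjunction, and since Lemma \ref{lemma-DG-ny-resolution} exhibits $L_i$ and ${}_iL$ as \emph{finite cell} (hence compact, hence dualizable) modules whose duals are each other up to shift, one gets that $\cap_i$ is also left adjoint to $\cup_i$; composing adjunctions yields $\mf{U}^!_i \dashv \mf{U}^!_i$ and $\mf{U}^!_i \dashv \mf{U}^!_i$ on both sides.

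For relation (i), I would compute $\cap_i \circ \cup_i$, i.e.\ ${}_iL \otimes^{\mathbf L}_{A_n^!} L_i = \RHOM_{A_n^!}(L_i, L_i)$, using the resolution $\mathbb{P}(L_i)$ of \eqref{pdgresLi1}. Applying $\HOM_{A_n^!}(-, L_i)$ to that four-term twisted complex and using $\HOM_{A_n^!}(P_j, L_i) = \Bbbk$ if $j=i$ and $0$ otherwise, one sees only the two copies of $P_i$ (in homological degrees $0$ and $1$) survive, all the connecting maps land in the $P_{i\pm1}$ or $P_{i-1}$ terms and hence die, so $\RHOM_{A_n^!}(L_i,L_i) \cong \Bbbk \oplus \Bbbk[-1]$ with zero differential. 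Therefore $\mf{U}^!_i\circ \mf{U}^!_i = \cup_i\circ(\cap_i\circ\cup_i)\circ\cap_i \cong \cup_i\circ\cap_i \oplus \cup_i\circ\cap_i[1] = \mf{U}^!_i \oplus \mf{U}^!_i[1]$. Relation (ii) is the easiest: for $|i-j|>1$ the vertices $i,i+1$ and $j,j+1$ are disjoint and non-adjacent in $Q_n$, so $\RHOM_{A_n^!}(L_i, L_j) = 0$ (no paths connect the relevant idempotents, visible immediately from the resolution \eqref{pdgresLi1}), which forces the two bimodules $B_i, B_j$ to commute under $\otimes_{A_n^!}$ up to quasi-isomorphism; equivalently one checks $\cap_j\circ\cup_i \cong 0$ isn't needed — rather one checks $\cup_i\cap_i\cup_j\cap_j \cong \cup_j\cap_j\cup_i\cap_i$ directly by rearranging the (derived) tensor factors since the supports are disjoint.

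Relation (iii) for $|i-j|=1$ is the heart of the matter and the step I expect to be the main obstacle. Here one must compute $\cap_i \circ \cup_j \circ \cap_j \circ \cup_i$, or more efficiently show $\cap_j \circ \cup_i \cong \cap_i \circ \cup_j$ is \emph{not} quite what is needed; instead the cleanest route is to compute $\RHOM_{A_n^!}(L_i, L_j) = {}_jL \otimes^{\mathbf L}_{A_n^!} L_i$ and show it is one-dimensional (say $\cong \Bbbk$, in a single homological degree), so that the bimodule $B_i \otimes_{A_n^!} B_j \otimes_{A_n^!} B_i = L_i \otimes \RHOM(L_i,L_j) \otimes \RHOM(L_j, L_i) \otimes \mathbb{P}({}_iL)$ collapses to $L_i \otimes \mathbb{P}({}_iL) = B_i$ up to quasi-isomorphism, giving $\mf{U}^!_i\circ\mf{U}^!_j\circ\mf{U}^!_i \cong \mf{U}^!_i$. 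Concretely: apply $\HOM_{A_n^!}(-, L_j)$ to the resolution \eqref{pdgresLi1} of $L_i$; exactly one of the four terms, namely $P_{i-1}$ or $P_{i+1}[1]$ depending on whether $j=i-1$ or $j=i+1$, contributes a single $\Bbbk$, and one must check the incoming and outgoing maps (the differentials in the twisted complex, e.g.\ the map labelled $(i-1|i|i+1)$ and the maps $(i|i\pm1)$, $(i\pm1|i)$) induce zero on $\HOM(-,L_j)$ so no cancellation occurs and no shift is introduced beyond bookkeeping. The delicate points are (a) tracking the homological-degree shifts through all the compositions so that the final answer is $\mf{U}^!_i$ on the nose rather than a shift of it, and (b) making sure the \emph{DG} (not merely graded) structure is respected — i.e.\ that the differential $\dif$ on $A_n^!$, which acts nontrivially via $\dif(i|i+1) = c_i(i|i+1)$, does not create extra differentials in $\RHOM_{A_n^!}(L_i,L_j)$; this requires checking that the relevant $\HOM$ complexes have $\dif = 0$, which follows because the surviving generators sit in the "bottom" of the resolutions where $\dif$ vanishes by positivity (Definition \ref{defn-positive-p-DGA}(iii)). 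Since this theorem is quoted from \cite[Theorem 5.10]{QiSussan1}, I would ultimately cite that reference for the full verification, but the computation above is the argument it records.
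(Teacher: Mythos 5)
The paper does not prove this theorem at all: it is imported verbatim from \cite[Theorem 5.10]{QiSussan1}, so the only ``proof'' on record is the citation, and your plan of verifying the relations directly from the resolutions of Lemma \ref{lemma-DG-ny-resolution} is indeed the kind of computation the cited reference carries out. Your treatment of (i) and (ii) is essentially sound, with one conceptual wobble in (ii): the actual mechanism is not a disjoint-support commutation of nonzero bimodules but the vanishing $\RHOM_{A_n^!}(L_j,L_i)\cong {}_jL\otimes^{\mathbf{L}}_{A_n^!}L_i=0$ for $|i-j|>1$ (no summand $P_j$ occurs in $\mathbb{P}(L_i)$), so \emph{both} composites $\mf{U}^!_i\mf{U}^!_j$ and $\mf{U}^!_j\mf{U}^!_i$ are zero -- this is the second-highest weight space, where $u_iu_j$ acts by $0$ -- and the relation holds trivially. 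The step you dismiss as ``not needed'' is in fact the whole argument.

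The genuine gap is exactly at the two points you flag and then defer: relation (iii) and self-biadjointness. For $j=i+1$, your own method gives $\RHOM_{A_n^!}(L_{i+1},L_i)\cong\Bbbk$ concentrated in degree $0$ (from the unshifted summand $P_i$ of $\mathbb{P}(L_{i+1})$), but $\RHOM_{A_n^!}(L_i,L_{i+1})\cong\Bbbk$ concentrated in degree $1$ (it comes from the \emph{shifted} summand $P_{i+1}[1]$ of $\mathbb{P}(L_i)$), and similarly for $j=i-1$. So the tensor factor $\RHOM(L_i,L_j)\otimes\RHOM(L_j,L_i)$ is a one-dimensional space sitting in degree $\pm 1$, not in degree $0$: carried out naively with the functors as literally written, your computation produces $\mf{U}^!_i$ up to a nontrivial shift, not ``$\mf{U}^!_i$ on the nose.'' The statement with no shift (and likewise genuine self-biadjointness, which does not follow ``formally'' from duals that agree only ``up to shift'') depends on the precise normalizations of the cup and cap functors in the bigraded setting of \cite{QiSussan1}, where internal-degree shifts are built in so that these discrepancies cancel; note also that the identification \eqref{eqn-cap-RHOM} between ${}_iL\otimes^{\mathbf{L}}_{A_n^!}(-)$ and $\RHOM_{A_n^!}(L_i,-)$ itself involves a shift that your two descriptions of $\mf{U}^!_i$ use interchangeably (already visible in (i), where one convention yields $\Bbbk\oplus\Bbbk[-1]$ and the other $\Bbbk\oplus\Bbbk[1]$). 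Until those conventions are pinned down and the shifts are shown to cancel, the key assertion ``no shift is introduced beyond bookkeeping'' is unsubstantiated, and the proof of (iii) and of self-biadjointness is incomplete; this is precisely the content one is borrowing when citing \cite[Theorem 5.10]{QiSussan1}.
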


Via the self-adjointness of the functors $\mf{U}^!_i$, one can define, for each $i \in \{1,\dots, n-1\}$, the \emph{braid functors} on $\mc{D}^c(A_n^!)$ as follows. For any $M\in \mc{D}(A_n^!)$, set
\begin{equation}
\mf{T}^!_i(M) : =L_i \o \RHOM_{A_n^!}(L_i,M)[1] \xrightarrow{\lambda} M.
\end{equation}
Here $\lambda$ is the canonical pairing map. Alternatively, $\lambda$ can be identified with the counit map of the biadjoint pair $\cup_i$ and $\cap_i$ (identifying $\cap_i(M)$ with $\RHOM_{A_n^!}(L_i,M)$ via Equation \eqref{eqn-cap-RHOM}).

\begin{thm}\cite[Theorem 5.14]{QiSussan1}
\label{thm-braidrelations}
The functors $ \mf{T}^!_i$, $ i=1, \ldots, n-1 $, are invertible on $\mc{D}^c(A_n^!)$. Furthermore, they satisfy the following categorical braid relations:
\begin{enumerate}
\item $ \mf{T}^!_i  \mf{T}^!_j \cong \mf{T}^!_j \mf{T}^!_i $ if $ |i-j|>1 $,
\item $ \mf{T}^!_i \mf{T}^!_{i+1} \mf{T}^!_i \cong \mf{T}^!_{i+1} \mf{T}^!_{i} \mf{T}^!_{i+1} $.
\end{enumerate}
\end{thm}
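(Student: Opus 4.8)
The plan is to deduce the braid relations for the functors $\mf{T}^!_i$ directly from the Temperley--Lieb relations of Theorem \ref{thm-TL-action} together with the cone description $\mf{T}^!_i(M) = (L_i\otimes\RHOM_{A_n^!}(L_i,M)[1]\xrightarrow{\lambda} M)$. First I would establish invertibility: using self-biadjointness of $\mf{U}^!_i=\cup_i\circ\cap_i$, define a candidate inverse $\mf{T}'_i(M) = (M\xrightarrow{\eta} L_i\otimes\RHOM_{A_n^!}(L_i,M)[-1])$, the cocone of the unit map, and check $\mf{T}^!_i\circ\mf{T}'_i\cong\Id\cong\mf{T}'_i\circ\mf{T}^!_i$. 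The key input here is relation (i) of Theorem \ref{thm-TL-action}, $\mf{U}^!_i\circ\mf{U}^!_i\cong\mf{U}^!_i\oplus\mf{U}^!_i[1]$: feeding $\mf{T}^!_i$ into $\mf{T}'_i$ produces an iterated cone built out of $\Id$, $\mf{U}^!_i$, and $\mf{U}^!_i[1]$ terms, and the composite of unit and counit maps that appears is governed by the ``zig-zag'' (triangle) identities of the biadjunction; the contributions from $\mf{U}^!_i$ cancel in pairs against the $\mf{U}^!_i[1]$ term, leaving $\Id$. This is the standard Seidel--Thomas spherical-twist argument (cf.\ \cite{SeidelThomas}), transported to the DG setting, and it is legitimate because all objects in sight are compact (Lemma \ref{lemma-DG-ny-resolution}) so the cones are well behaved in $\mc{D}^c(A_n^!)$.

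For the far-commutativity relation $\mf{T}^!_i\mf{T}^!_j\cong\mf{T}^!_j\mf{T}^!_i$ when $|i-j|>1$, I would expand both sides as double cones. Since $\mf{U}^!_i$ and $\mf{U}^!_j$ commute by relation (ii), and the defining natural transformations $\lambda$ for indices $i$ and $j$ are ``supported'' on disjoint parts of the quiver (the functors $\cup_i,\cap_i$ only involve vertices $i-1,i,i+1$), the two iterated-cone constructions agree term by term; more precisely the mixed terms $L_i\otimes\RHOM(L_i,L_j\otimes\RHOM(L_j,M))$ are symmetric in $i\leftrightarrow j$ up to the commutativity isomorphism. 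This reduces to checking that the structure maps are compatible with that isomorphism, which is a diagram chase.

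The hardest part is the braid relation $\mf{T}^!_i\mf{T}^!_{i+1}\mf{T}^!_i\cong\mf{T}^!_{i+1}\mf{T}^!_i\mf{T}^!_{i+1}$, where relation (iii), $\mf{U}^!_i\circ\mf{U}^!_{i+1}\circ\mf{U}^!_i\cong\mf{U}^!_i$, does the essential work. Here I would expand each side as an iterated cone with four layers (one $\Id$ and terms involving $\mf{U}^!_i$, $\mf{U}^!_{i+1}$, and their composites), then reorganize using relation (iii) to collapse the length-three composites. The two sides become cones over the same complex of functors; to conclude they are isomorphic as functors — not merely that their terms match — one must produce an actual isomorphism of the twisted complexes, i.e.\ verify that the connecting maps (built from units, counits, and the TL isomorphisms) are intertwined. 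This last bookkeeping is where I expect the real difficulty: one must pin down the TL isomorphisms of Theorem \ref{thm-TL-action} explicitly enough (e.g.\ via the resolutions $\mathbb{P}(L_i)$ of Lemma \ref{lemma-DG-ny-resolution}) to check the relevant pentagon/hexagon of natural transformations commutes, or else invoke a general uniqueness principle (e.g.\ that an invertible functor is determined by its action on generators, or a Hom-space computation showing the space of candidate isomorphisms is one-dimensional). I would most likely follow the established strategy: since this is stated as the $p=2$ specialization, cite \cite[Theorem 5.14]{QiSussan1}, whose proof already carries out exactly this analysis at arbitrary $p$, and simply note that the DG hypotheses here are the $p=2$ case of the $p$-DG setting there.
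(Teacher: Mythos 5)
Your final move---citing \cite[Theorem 5.14]{QiSussan1} and noting that the present DG setting is just the $p=2$ specialization of the $p$-DG framework there---is precisely what the paper does: it gives no independent argument and simply quotes the theorem from that reference. Your preliminary Seidel--Thomas-style cone analysis is a plausible sketch of what the cited proof entails (and you correctly flag that the braid relation does not follow purely formally from the Temperley--Lieb isomorphisms), but none of that bookkeeping appears in this paper, so the approaches agree.
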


\paragraph{Non-trivial maps between simples.} \label{nontrivmaps}
Lemma \ref{lemma-DG-ny-resolution} allows one to construct explicit DG module maps between the cofibrant replacements $\mathbb{P}(L_i)$. We record them here for later use.

The unique non-trivial map (up to homotopy) $ \alpha_{i,i+1} \colon L_{i+1}  \rightarrow L_i  $ is given by:
\begin{equation}\label{map-alpha-i-i+1}
\begin{gathered}
\xymatrix@C=4em{
 & P_{i}  
 \ar[dd]^{} \ar[dr]^{} 
 \ar@/^2.0pc/@[red][rdddd]|(.63){(i)} &  \\
P_{i+1}[1] \ar[ur]^{} \ar[dr]^{} 
\ar@/_2.0pc/@[red][ddddr]|(.3){(i+1)}  
& &  P_{i+1} 
\\
& P_{i+2} [1] \ar[ur]^{} 
& & \\
 & P_{i-1}  \ar[dd]^{} \ar[dr]_(.5){} &  \\
P_{i} [1]\ar[ur]_{} \ar[dr]^{} & &  P_{i}  \\
& P_{i+1} [1]  \ar[ur]_{}   & & \\
}
\end{gathered} \ .
\end{equation}

The unique (up to homotopy) map $ \alpha_{i+1,i} \colon L_i \rightarrow L_{i+1} $ is given by:
\begin{equation}\label{map-alpha-i+1-i}
\begin{gathered}
\xymatrix@C=4em{
 & P_{i-1}  \ar[dd]^{} \ar[dr]^{} 
 \ar@/^2.5pc/@[red][ddd]|(.8){(i-1|i)}
 \ar@/_2.0pc/@[red][ddddl]|(.7){(i-1|i|i+1)}
 &  \\
P_i [1]  \ar[ur]^{} \ar[dr]^{}  
\ar@/_2.0pc/@[red][ddd]|(.7){(i|i+1)}
\ar@/_2.0pc/@[red][ddr]|(.7){(i)}
& &  P_i   \\
& P_{i+1} [1]  \ar[ur]^{}  
 \ar@/^2.5pc/@[red][ddr]|(.8){(i+1)}
& & \\
 & P_{i}  \ar[dd]_{} \ar[dr]_(.5){} &  \\
P_{i+1}[1] \ar[ur]_{} \ar[dr]^{} & &  P_{i+1}
\\
& P_{i+2} [1]\ar[ur]_{}   & & \\
}
\end{gathered} \ .
\end{equation}


The unique (up to homotopy) non-trivial map $ \alpha_{n-1,n} \colon L_{n}  \rightarrow L_{n-1} $ is given as follows:

\begin{equation}
\begin{gathered}
\xymatrix{ 
P_1 \ar[r] \ar[dr] & P_2 \ar[r]  \ar[d] \ar[dr] & \cdots \ar[r] & P_{n-2} \ar[r] \ar[d] \ar[dr] 
& P_{n-1} \ar[r] \ar[d]  
\ar@/_3.5pc/@[red][ddd]|(.43){(n-1)}
\ar@/^2.0pc/@[red][ddddl]|(.53){(n-1|n)}
& P_n \\
& P_1 [-1]\ar[r] \ar[ur] & \cdots \ar[r] & P_{n-3} [-1]\ar[r] \ar[ur] & P_{n-2} [-1]\ar[ur] 
\ar@/_2.0pc/@[red][dd]|(.33){(n-2|n-1)}
\ar@/^5.0pc/@[red][dddl]|(.53){(n-2|n-1|n)}
\\
&& & P_{n-2} \ar[dd]  \ar[dr] 
\\
&& P_{n-1} [1] \ar[ur] \ar[dr] & & P_{n-1} \\
&& & P_{n} [1] \ar[ur]
}
\end{gathered} \ .
\end{equation}

The unique (up to homotopy) non-trivial map $ \alpha_{n,n-1} \colon L_{n-1}\rightarrow L_{n}  $ is given by:

\begin{equation}
\begin{gathered}
\xymatrix{ 
&& & P_{n-2} \ar[dd]  \ar[dr] 
\ar@/^2.0pc/@[red][ddddr]|(.63){(n-2)}
\\
&& P_{n-1} [1]\ar[ur] \ar[dr] 
\ar@/_2.0pc/@[red][ddrr]|(.43){(n-1)}
& & P_{n-1} \\
&& & P_{n} [1] \ar[ur] 
\ar@/^2.0pc/@[red][drr]|(.63){(n)}
\\
P_1 \ar[r] \ar[dr] & P_2 \ar[r]  \ar[d] \ar[dr] & \cdots \ar[r] & P_{n-2} \ar[r] \ar[d] \ar[dr] 
& P_{n-1} \ar[r] \ar[d]  
& P_n \\
& P_1 [-1] \ar[r] \ar[ur] & \cdots \ar[r] & P_{n-3} [-1] \ar[r] \ar[ur] & P_{n-2} [-1] \ar[ur] 
}
\end{gathered} \ .
\end{equation}

\paragraph{Some homotopies.}
Define the map $h_i \colon L_i \rightarrow L_i$
for $i=1,\ldots,n-1$ by
\begin{equation}
\begin{gathered}
\xymatrix@C=4em{
 & P_{i-1}  \ar[dd]_{} \ar[dr]_(.5){} 
\ar@/^2.0pc/@[red][ddd]|(.43){(i-1)}
 &  \\
P_{i}[1] \ar[ur]_{} \ar[dr]^{} 
\ar@/^2.0pc/@[red][ddd]|(.43){(i)}
& &  P_{i}  \\
& P_{i+1} [1] \ar[ur]_{}   & & \\
 & P_{i-1}  \ar[dd]_{} \ar[dr]_(.5){} &  \\
P_{i}[1] \ar[ur]_{} \ar[dr]^{} & &  P_{i}  \\
& P_{i+1} [1] \ar[ur]_{}   & &
} 
\end{gathered} \ .
\end{equation}

Let $ h_n \colon L_n \rightarrow L_n$ be the map
\begin{equation}
\begin{gathered}
\xymatrix{ 
P_1 \ar[r] \ar[dr] & P_2 \ar[r]  \ar[d] \ar[dr] & \cdots \ar[r] & P_{n-2} \ar[r] \ar[d] \ar[dr] 
& P_{n-1} \ar[r] \ar[d]  
& P_n 
\ar@/^2.0pc/@[red][dd]|(.43){(n)}
 \\
& P_1 [-1] \ar[r] \ar[ur] & \cdots \ar[r] & P_{n-3} [-1] \ar[r] \ar[ur] & P_{n-2} [-1] \ar[ur] 
\\
P_1 \ar[r] \ar[dr] & P_2 \ar[r]  \ar[d] \ar[dr] & \cdots \ar[r] & P_{n-2} \ar[r] \ar[d] \ar[dr] 
& P_{n-1} \ar[r] \ar[d]  
& P_n \\
& P_1 [-1] \ar[r] \ar[ur] & \cdots \ar[r] & P_{n-3} [-1] \ar[r] \ar[ur] & P_{n-2} [-1] \ar[ur] 
}
\end{gathered} \ .
\end{equation}

Finally, we define maps $h_{n,i} \colon L_i \rightarrow L_n$
\begin{equation}
\begin{gathered}
\xymatrix{ 
&&  & P_{i-1}  \ar[dd]_{} \ar[dr]_(.5){} 
\ar@/^2.0pc/@[red][dddd]|(.63){(i-1)}
&  &&\\
&& P_{i}[1] \ar[ur]_{} \ar[dr]^{} 
\ar@/_2.0pc/@[red][rdd]|(.43){(i)}
& &  P_{i}  &&\\
&&  & P_{i+1} [1]\ar[ur]_{}   & & &&\\
P_1 \ar[r] \ar[dr] & P_2 \ar[r]  \ar[d] \ar[dr] & \cdots \ar[r]
& P_i \ar[r] \ar[d] \ar[dr]
& \cdots \ar[r]
& P_{n-2} \ar[r] \ar[d] \ar[dr] 
& P_{n-1} \ar[r] \ar[d]  
& P_n \\
& P_1 [-1]\ar[r] \ar[ur] & \cdots \ar[r] 
& P_{i-1} [-1]\ar[r] \ar[ur]
& \cdots \ar[r]
& P_{n-3} [-1]\ar[r] \ar[ur] & P_{n-2} [-1]\ar[ur] 
}
\end{gathered} \ .
\end{equation}

\paragraph{Auxiliary maps.}
It is notationally convenient to define
\[
L_0 :=
\begin{gathered}
\xymatrix{   & 0 \ar[dd]^{}\ar[dr]^-{} & &\\
 0 \ar[ur]^-{}  \ar[dr]_-{} && 0 \\
& P_{1} [1]\ar[ur]_-{} && }
\end{gathered} \ .
\]
Now we define maps
$\alpha_{1,0} \colon L_0 \rightarrow L_1$ 
\begin{equation}
\xymatrix@C=4em{
 & 0 \ar[dd]^{} \ar[dr]^{}  &  \\
0 \ar[ur]^{} \ar[dr]^{}   & &  0 \\
& P_{1} [1] \ar[ur]^{} 
\ar@/^2.0pc/@[red][ddr]|(.63){(1)}
& & \\
 & 0  \ar[dd]_{} \ar[dr]_(.5){} &  \\
P_{1}[1] \ar[ur]_{} \ar[dr]^{} & &  P_{1}  \\
& P_{2} [1] \ar[ur]_{}   & & \\
}
\end{equation}

and 
$\alpha_{0,1} \colon L_1 \rightarrow L_0$ 
\begin{equation}
\begin{gathered}
\xymatrix@C=4em{
 & 0  \ar[dd]_{} \ar[dr]_(.5){} &  \\
P_{1}[1] \ar[ur]_{} \ar[dr]^{} 
\ar@/_2.0pc/@[red][ddddr]|(.33){(1)}
& &  P_{1}  \\
& P_{2} [1] \ar[ur]_{}   & & \\
 & 0 \ar[dd]^{} \ar[dr]^{}  &  \\
0 \ar[ur]^{} \ar[dr]^{}   & &  0 \\
& P_{1}[1] \ar[ur]^{}  & & \\
}
\end{gathered} \ .
\end{equation}

Note that $\alpha_{1,0} \alpha_{0,1}$ is null-homotopic.

\section{The \texorpdfstring{$A_{\infty}$}{A-infinity} zigzag algebra}\label{sec:ainfzz}

\paragraph{The zigzag algebra.}
We now recall the definition of the algebra $A_n$ that served a crucial role in \cite{KS}. When graded by path length, this algebra is Koszul dual to the algebra $A_n^!$ when it is also graded by path length.

\begin{defn}\label{def-algebra-zigzag} The ring $A_n$ is the quotient of the path algebra $\Bbbk Q_n$ by the relations
\[
(i|i \pm 1|i \pm 2)=0,
~~ (i|i+1|i)=(i|i-1|i), \text{ for }
~~ (i=2,\dots, n-1),~~ (n|n-1|n)=0 ,
\]
\[
\deg((i))=0 , \quad \deg((i|i+1))=0 , \quad \deg((i+1|i))=1 .
\]

The zigzag algebra $C_{n-1}$ is idempotented subalgebra given by
\[
C_{n-1} = ((1)+\cdots+(n-1)) A_n ((1)+\cdots+(n-1)) .
\]
\end{defn}

For later use, we define the projective modules
\begin{equation}\label{eqn-Zi}
   Z_i:= A_n\cdot (i), \quad i=1,\dots, n,    
\end{equation}
which are the indecomposable direct summands of $A_n$. Then $C_{n-1}$ can also be identified with
\begin{equation}
    C_{n-1}\cong\END_{A_n}(\oplus_{i=1}^{n-1} Z_i) \ .
\end{equation}
\paragraph{The dual DG algebra $E_n$.}
We now define an intermediate DG algebra $E_n$ as the graded endomorphism algebra 
\begin{subequations}
\begin{equation}\label{eqn-En}
    E_n:= \END_{A_n^!}(\oplus_{i=1}^n \mathbb{P}(L_i)).
\end{equation}
The differential on the endormphism algebra, as usual, is given by
\begin{equation}\label{eqn-En-d}
    d(f)(x):=d(f(x))+f(d(x))
\end{equation}
    \end{subequations}
for any homogeneous $f\in E_n$ and $x\in \oplus_{i=1}^n \mathbb{P}(L_i) $. Here the usual sign that should appear in \eqref{eqn-En-d} is dropped since we are working in characteristic two. 

In $E_n$, we have the natural idempotents $1_i$, $i=1,\dots, n$, which act as identity maps on $\mathbb{P}(L_i)$ and zero on $\mathbb{P}(L_j)$ for $j\neq i$. We will also use an idempotent truncation of $E_n$ as follows. Consider $e=1_1+\cdots+1_{n-1}$.
Then we have the idempotented subalgebra
$S_{n-1} := e E_n e$.

We will use the cofibrant DG modules
\begin{equation}\label{eqn-Qi}
    Q_i:= E_n 1_i, \quad i=1,\dots, n,
\end{equation}
which, by construction, is a direct DG summand of $E_n$.

For the computation of the $A_\infty$-structure on the homology of $E_n$, we will consider a subalgebra of $E_n$ defined by the explicit DG module maps recorded above. Let $E_n^\prime$ be the $\Bbbk$-algebra generated by
\begin{itemize}
    \item $1_i$ for $i=1,\ldots, n$, $\deg(1_i)=0$.
    \item $\alpha_{i+1,i}, \alpha_{i,i+1}$ for $i=1,\ldots, n-1$, $\deg(\alpha_{i+1,i})=1, \deg(\alpha_{i,i+1})=0$.
    \item $h_i$ for $i=1,\ldots,n$, $\deg(h_i)=0$.
    \item $h_{n,i}$ for $i=1,\ldots,n-2$, $\deg(h_{n,i})=1$.
\end{itemize}
We set $h_{n,n-1}=\alpha_{n,n-1}$.

\begin{prop}
    These generators satisfy the following set of relations:
    \begin{itemize}
\item The elements $1_i$ are orthogonal idempotents.
\item The idempotents $1_i$  act by identity on the other generators.  
\item $\alpha_{i,i+1} \alpha_{i+1,i+2}=0$, for $i=1,\ldots, n-2$.
\item $\alpha_{i+2,i+1} \alpha_{i+1,i}=0$, for $i=1,\ldots, n-3$.
\item $h_i^2=h_i$, for $i=1,\ldots,n$.
\item $h_i \alpha_{i,i+1}=0 $, for $i=1,\ldots,n-1$.
\item $ \alpha_{i,i+1} h_{i+1} = \alpha_{i,i+1} $, for $i=1,\ldots,n-2$.
\item $ \alpha_{i,i+1} \alpha_{i+1,i} \alpha_{i,i+1}=0$.
\item $ \alpha_{n-1,n} h_{n} =  0 $.
\item $\alpha_{i+1,i} h_i = h_{i+1} \alpha_{i+1,i} $, for $i=1,\ldots,n-2$.
\item $\alpha_{n,n-1} h_{n-1} + h_n \alpha_{n,n-1}=\alpha_{n,n-1} $.
\item $h_{n,i} h_i=h_{n,i}$, for $i=1,\ldots,n-2$.
\item $h_n h_{n,i}=0$, for $i=1,\ldots,n-2$.
\item $h_{n,i} \alpha_{i,i+1} =0$, for $i=1,\ldots,n-2$.
\end{itemize}

The $DG$ structure is defined on the generators by
\begin{itemize}
\item $\dif(1_i)=0$,
\item $\dif(\alpha_{i+1,i})=\dif(\alpha_{i,i+1})=0$,
\item $\dif(h_i)= \alpha_{i,i-1}\alpha_{i-1,i}+ \alpha_{i,i+1}\alpha_{i+1,i}$, for $i=1,\ldots,n-1$,
\item $\dif(h_{n,i})=h_{n,i+1} \alpha_{i+1,i} $.
\end{itemize}
\end{prop}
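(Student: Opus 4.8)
The plan is to verify the relations and the DG structure by direct computation, using the explicit filtered-DG-module descriptions of the maps $\alpha_{i\pm1,i}$, $h_i$, $h_{n,i}$ that were recorded in Section~\ref{sec:dualzigzagalgebra}. Concretely, each generator is a DG module endomorphism (or morphism) between the cofibrant replacements $\mathbb{P}(L_i)$, presented in the schematic notation of Notation~\ref{ntn-finite-cell-mod} as a matrix of paths in $A_n^!$ connecting the various $P_j[\ell]$-summands. Composition of two such maps is composition of these path-matrices, read off summand by summand, and $\dif$ on $E_n$ is the supercommutator \eqref{eqn-En-d} with the internal differentials of the $\mathbb{P}(L_i)$'s (which are themselves built from the paths $(i|i\pm1)$, $(i-1|i|i+1)$, etc.). So the whole proposition is a finite, if tedious, bookkeeping exercise; I would organize it by type of relation rather than grinding through all cases.

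First I would dispose of the formal statements: the $1_i$ are the idempotents projecting onto the summand $\mathbb{P}(L_i)$, so orthogonality and the fact that they act as identities on generators landing in the appropriate summand is immediate from \eqref{eqn-En}. Next, the ``vanishing'' relations ($\alpha_{i,i+1}\alpha_{i+1,i+2}=0$, $\alpha_{i+2,i+1}\alpha_{i+1,i}=0$, $\alpha_{i,i+1}\alpha_{i+1,i}\alpha_{i,i+1}=0$, $\alpha_{n-1,n}h_n=0$, $h_{n,i}\alpha_{i,i+1}=0$, $h_n h_{n,i}=0$, $h_{n,i}\alpha_{i,i+1}=0$) should all follow because the composite path-matrix either has no compatible composable paths (the target summands of the first map do not match the nonzero source summands of the second) or produces a path that is zero in $A_n^!$ by the defining relations of Definition~\ref{def-algebra-An}, in particular $(1|2|1)=0$ and $(i|i-1|i)=(i|i+1|i)$. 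For the idempotency $h_i^2=h_i$ and the ``absorption'' relations $h_i\alpha_{i,i+1}=0$, $\alpha_{i,i+1}h_{i+1}=\alpha_{i,i+1}$, $\alpha_{i+1,i}h_i=h_{i+1}\alpha_{i+1,i}$, $h_{n,i}h_i=h_{n,i}$, I would trace through the red-arrow diagrams: these relations encode which summand of $\mathbb{P}(L_i)$ the homotopy $h_i$ is supported on, and composing with $\alpha$ either kills that summand or reproduces $\alpha$ up to the $A_n^!$-relations. The only subtle cases are the ones with a genuine sum, namely $\dif(h_i)=\alpha_{i,i-1}\alpha_{i-1,i}+\alpha_{i,i+1}\alpha_{i+1,i}$, $\alpha_{n,n-1}h_{n-1}+h_n\alpha_{n,n-1}=\alpha_{n,n-1}$, and $\dif(h_{n,i})=h_{n,i+1}\alpha_{i+1,i}$; here one must carefully apply the supercommutator formula \eqref{eqn-En-d}, compute $\dif\circ h - h\circ \dif$ on each generator $v_j$ of $\mathbb{P}(L_i)$ using the internal differential from \eqref{pdgresLi1} (resp.\ \eqref{pdgresLn}), and match the result termwise against the asserted composite of $\alpha$'s.

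The main obstacle I anticipate is the signless bookkeeping across the many summands $P_j[\ell]$ in $\mathbb{P}(L_n)$, equation~\eqref{pdgresLn}: that resolution has roughly $2n-2$ summands arranged in two rows with both identity arrows and the nontrivial arrows $(j|j+1|j+2)$, so the maps $h_n$, $h_{n,i}$, $\alpha_{n,n-1}$, $\alpha_{n-1,n}$ are large matrices, and verifying $\dif(h_{n,i})=h_{n,i+1}\alpha_{i+1,i}$ and the mixed relation $\alpha_{n,n-1}h_{n-1}+h_n\alpha_{n,n-1}=\alpha_{n,n-1}$ requires keeping track of which generator maps where through the long zigzag. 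Since we work in characteristic $2$ there are no signs to worry about, which is a genuine simplification, but the combinatorics of ``does this composable pair of paths survive in $A_n^!$'' still has to be checked for each entry. I would present the ``generic'' interior case $2\le i\le n-1$ in full (using diagram~\eqref{pdgresLi1} and the displayed pictures of $\alpha_{i,i+1},\alpha_{i+1,i},h_i$) and then indicate that the boundary cases $i=1$ (using $L_0$ and the auxiliary maps $\alpha_{1,0},\alpha_{0,1}$) and $i=n$ (using \eqref{pdgresLn}, \eqref{pdgresnL}) are handled the same way, relegating the $\mathbb{P}(L_n)$ matrix computations to a routine check.
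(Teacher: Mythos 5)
Your proposal is correct and matches the paper's approach: the paper's proof simply declares these to be "straightforward computations," i.e.\ exactly the direct, summand-by-summand verification of the compositions of the explicit chain maps $\alpha_{i\pm1,i}$, $h_i$, $h_{n,i}$ on the resolutions $\mathbb{P}(L_i)$ (and $\mathbb{P}(L_n)$, with the auxiliary maps $\alpha_{1,0},\alpha_{0,1}$ for the boundary case) that you outline. Your write-up just makes explicit the bookkeeping the paper leaves to the reader.
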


\begin{proof}
These are straightforward computations. 
\end{proof}

One could easily compute bases of the idempotented subspaces
$1_i E_n^\prime 1_j $ and their homologies
$1_i \mH(E_n^\prime) 1_j $ as follows.

First assume $i,j \neq n$.
\[
1_i E_n^\prime 1_i=\Bbbk \langle 1_i, h_i, \alpha_{i,i+1} \alpha_{i+1,i}, \alpha_{i,i-1} \alpha_{i-1,i}  \rangle
\quad \quad d(h_i)=\alpha_{i,i+1} \alpha_{i+1,i} + \alpha_{i,i-1} \alpha_{i-1,i}
\]
\[
\mH(1_i E_n^\prime 1_i)=\Bbbk \langle 1_i, c_i \rangle
\]

\[
1_i E_n^\prime 1_j = 0 \quad \text{for} \quad |i-j|>1
\]

\[
1_i E_n^\prime 1_{i+1} = \Bbbk \langle \alpha_{i,i+1} \rangle
\]
\[
\mH(1_i E_n^\prime 1_{i+1}) = \Bbbk \langle \alpha_{i,i+1} \rangle
\]

\[
1_{i+1} E_n^\prime 1_{i} = \Bbbk \langle \alpha_{i+1,i}, \alpha_{i+1,i} \alpha_{i,i+1} \alpha_{i+1,i}, \alpha_{i+1,i} h_i \rangle \quad \quad d(\alpha_{i+1,i} h_i)=\alpha_{i+1,i} \alpha_{i,i+1} \alpha_{i+1,i}
\]
\[
\mH(1_{i+1} E_n^\prime 1_{i}) = \Bbbk \langle \alpha_{i+1,i} \rangle
\]

Second, assume $i$ or $j$ is $n$.
\[
1_n E_n^\prime 1_n = \Bbbk \langle 1_n , h_n, \alpha_{n,n-1} \alpha_{n-1,n}  \rangle \quad \quad d(h_n)=\alpha_{n,n-1} \alpha_{n-1,n}
\]
\[
\mH(1_n E_n^\prime 1_n) = \Bbbk \langle 1_n \rangle
\]
\[
1_n E_n^\prime 1_i = \Bbbk \langle h_{n,i}, h_{n,i+1} \alpha_{i+1,i}  \rangle \quad \quad d(h_{n,i})=h_{n,i+1} \alpha_{i+1,i}   \quad \text{ for } i=1,\ldots,n-2 
\]
\[
\mH(1_n E_n^\prime 1_i)=0 \quad \text {for } i=1,\ldots,n-2
\]

\[
1_i E_n^\prime 1_n = 0 \text{ for } i=1,\ldots,n-2 
\]
\[
\mH(1_i E_n^\prime 1_n)=0 \quad \text {for } i=1,\ldots,n-2
\]
\[
1_n E_n^\prime 1_{n-1} = \Bbbk \langle \alpha_{n,n-1}, \alpha_{n,n-1} \alpha_{n-1,n} \alpha_{n,n-1}, h_n \alpha_{n,n-1} \rangle \quad \quad d(h_n \alpha_{n,n-1})=\alpha_{n,n-1} \alpha_{n-1,n} \alpha_{n,n-1}
\]
\[
\mH(1_n E_n^\prime 1_{n-1}) = \Bbbk \langle \alpha_{n,n-1} \rangle
\]

\[
1_{n-1} E_n^\prime 1_n = \Bbbk \langle \alpha_{n-1,n} \rangle
\]
\[
\mH(1_{n-1} E_n^\prime 1_n) = \Bbbk \langle \alpha_{n-1,n} \rangle
\]

\begin{prop}
   There is an isomorphism of algebras $A_n \cong \mH(E_n^\prime) \cong \mH(E_n)$. 
\end{prop}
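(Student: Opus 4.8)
The statement to be proven is that $A_n \cong \mathrm{H}(E_n') \cong \mathrm{H}(E_n)$ as algebras. The plan is to establish the two isomorphisms separately, working from the explicit local computations tabulated above and from the identification of $E_n$ with a DG endomorphism algebra of cofibrant resolutions.

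\emph{Step 1: Compute $\mathrm{H}(E_n)$ via $\Ext$-spaces.} By construction $E_n = \END_{A_n^!}(\oplus_i \mathbb{P}(L_i))$ with the standard endomorphism differential, and each $\mathbb{P}(L_i)$ is a cofibrant replacement of $L_i$ by Lemma \ref{lemma-DG-ny-resolution}. Hence, using \eqref{eqn-ext-space-as-invariants} and the fact that $\mathbb{P}(L_i)$ is cofibrant, we get
\[
\mathrm{H}(E_n) \;\cong\; \bigoplus_{i,j} \mathrm{H}\bigl(\HOM_{A_n^!}(\mathbb{P}(L_i),\mathbb{P}(L_j))\bigr) \;\cong\; \bigoplus_{i,j}\Ext^\bullet_{\mathcal{D}(A_n^!)}(L_i,L_j) \;=\; \Ext^\bullet_{A_n^!}(\textstyle\oplus_i L_i,\oplus_j L_j),
\]
as graded algebras (the algebra structure on the right being Yoneda composition). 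Since $A_n^!$ is Koszul with quadratic dual $A_n$ (as recorded in the remark after Definition \ref{def-algebra-An}), and the DG differential $\dif$ on $A_n^!$ is, by the Koszul-duality philosophy developed in \cite{QiSussan1}, matched precisely so that the total $\Ext$-algebra of the simples over $(A_n^!,\dif)$ is the zigzag-type algebra $A_n$, this identifies $\mathrm{H}(E_n)\cong A_n$. Concretely this can be verified by matching generators: the classes of $\alpha_{i,i+1},\alpha_{i+1,i}$ and of the loop $c_i$ in cohomology correspond to the path generators $(i|i+1),(i+1|i)$ and $(i|i\pm1|i)$ of $A_n$, with the degree conventions of Definition \ref{def-algebra-zigzag} matching the homological degrees found in the local $\Ext$ computations.

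\emph{Step 2: Compute $\mathrm{H}(E_n')$ from the tabulated local pieces.} The inclusion $E_n' \hookrightarrow E_n$ is a map of DG algebras. The explicit bases of $1_i E_n' 1_j$ and their differentials are listed just before the proposition; reading off the homology in each $1_i\,\mathrm{H}(E_n')\,1_j$ gives exactly: $\Bbbk\langle 1_i, c_i\rangle$ on the diagonal for $i\le n-1$, $\Bbbk\langle 1_n\rangle$ at the $(n,n)$-spot, one-dimensional spaces $\Bbbk\langle\alpha_{i,i+1}\rangle$ and $\Bbbk\langle\alpha_{i+1,i}\rangle$ on the off-diagonals $|i-j|=1$ (including $j=n$), and $0$ when $|i-j|>1$ or when $i\le n-2, j=n$ (and its transpose). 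Assembling these pieces, $\mathrm{H}(E_n')$ has a basis indexed exactly by the paths of the quiver $Q_n$ that survive the zigzag relations of Definition \ref{def-algebra-zigzag}: idempotents $(i)$, short paths $(i|i\pm1)$, and loops $c_i=(i|i\pm1|i)$ for $i\le n-1$. One then checks that the induced multiplication on $\mathrm{H}(E_n')$ satisfies precisely the defining relations of $A_n$: $(i|i\pm1|i\pm2)=0$ comes from $\alpha_{i,i+1}\alpha_{i+1,i+2}=0$ and $\alpha_{i+2,i+1}\alpha_{i+1,i}=0$ together with the vanishing homologies; the relation $(i|i+1|i)=(i|i-1|i)$ for $2\le i\le n-1$ and $(n|n-1|n)=0$ come from the relations $\dif(h_i)=\alpha_{i,i-1}\alpha_{i-1,i}+\alpha_{i,i+1}\alpha_{i+1,i}$ and $\dif(h_n)=\alpha_{n,n-1}\alpha_{n-1,n}$ (so the two double-paths become cohomologous, resp.\ the $(n|n-1|n)$ double-path becomes a boundary). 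This gives a surjection $A_n \twoheadrightarrow \mathrm{H}(E_n')$, which is an isomorphism by comparing dimensions in each graded $1_i(-)1_j$ block.

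\emph{Step 3: Compare $\mathrm{H}(E_n')$ with $\mathrm{H}(E_n)$.} Finally, the DG-algebra inclusion $E_n'\hookrightarrow E_n$ induces $\mathrm{H}(E_n')\to\mathrm{H}(E_n)$, and by Steps 1 and 2 both sides are abstractly $A_n$; one checks this induced map is the identity on generators $1_i,\alpha_{i,i+1},\alpha_{i+1,i},c_i$ (the images of $\alpha$'s and of the loops $c_i$ generate $\mathrm{H}(E_n)$ by Step 1), hence it is an isomorphism. The main obstacle is Step 1 (or, if one prefers to avoid invoking the Koszul-duality output of \cite{QiSussan1}, the dimension count in Step 2 realizing that $E_n'$ already ``sees'' all of $\mathrm{H}(E_n)$): one must be sure that $E_n'$ captures the full cohomology of $E_n$ — i.e.\ that no cohomology classes of $E_n$ lie outside the subalgebra $E_n'$ — which is exactly what the block-by-block basis computation before the proposition is designed to guarantee. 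Everything else is bookkeeping with the explicitly given chain maps and homotopies.
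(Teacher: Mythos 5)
Your Step 2 is exactly the paper's argument: the proof given there consists of the block-by-block bases of $1_iE_n^\prime 1_j$ and their homologies, matched against the defining relations of $A_n$, and your reading of those tables (including the identification of the surviving classes with $(i)$, $(i|i\pm1)$, $c_i$, and the vanishing of $(n|n-1|n)$) is correct. The problem is the other half of the statement, $\mH(E_n)\cong A_n$, equivalently that the inclusion $E_n^\prime\hookrightarrow E_n$ is a quasi-isomorphism. Your Step 3 asserts that this ``is exactly what the block-by-block basis computation before the proposition is designed to guarantee,'' but that computation takes place entirely inside $E_n^\prime$: it can tell you what $\mH(E_n^\prime)$ is, but it cannot by itself rule out cohomology classes of $E_n=\END_{A_n^!}(\oplus_i\mathbb{P}(L_i))$ that have no representative in the subalgebra $E_n^\prime$. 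So as written, the surjectivity of $\mH(E_n^\prime)\to\mH(E_n)$ is not established.

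Your Step 1 is meant to supply this, but it leans on ``the Koszul-duality philosophy \ldots matched precisely so that the total $\Ext$-algebra of the simples over $(A_n^!,\dif)$ is $A_n$'' --- which is essentially the statement being proven (computing $(A_n^!,\partial)^!$ is the point of the paper), so invoked at this level of vagueness it is circular; and ``matching generators'' only produces a map, not the needed upper bound on $\mH(E_n)$. What closes the gap is an independent computation of $\bigoplus_{i,j}\Ext^\bullet_{\mc{D}(A_n^!)}(L_i,L_j)$: since each $\mathbb{P}(L_j)\to L_j$ is a surjective quasi-isomorphism and $\mathbb{P}(L_i)$ is cofibrant, $\HOM_{A_n^!}(\mathbb{P}(L_i),\mathbb{P}(L_j))$ is homotopy equivalent to $\HOM_{A_n^!}(\mathbb{P}(L_i),L_j)$, and the latter complex is read off directly from the finite cell structure in Lemma \ref{lemma-DG-ny-resolution} (one basis element for each $P_j$-summand of $\mathbb{P}(L_i)$, with differential induced by the connecting arrows); its homology reproduces exactly the block dimensions you found for $\mH(E_n^\prime)$. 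Alternatively one may cite the explicit $\RHOM_{A_n^!}(L_i,L_j)$ computations of \cite{QiSussan1} (specialized to $p=2$), which were used there to verify the Temperley--Lieb relations. With either of these inserted in place of your Step 1, the rest of your argument goes through and coincides with the paper's.
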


\begin{proof}
    This is clear from the relations in the algebras and the computations of the homology above.
\end{proof}

\paragraph{$A_{\infty}$-structure on $C_{n-1}$.}
Consider the idempotented subalgebra $C_{n-1} \subset \mH(E_{n})$.  We compute the $A_{\infty}$-structure on
the idempotented algebra $C_{n-1} \cong \mH(S_{n-1})$
using the perturbation lemma.


Let $\overrightarrow{\alpha}_i = \alpha_{i,i+1} \alpha_{i+1,i}$ and $\overleftarrow{\alpha}_i = \alpha_{i,i-1} \alpha_{i-1,i}$.
Now we define linear maps $p$, $j$, and $H$ as follows.

\begin{align*}
p(h_i) &= 0 \\
p(\alpha_{i,i+1}) &=(i|i+1) \\
p(\alpha_{i+1,i}) &=(i+1|i) \\
p(\overrightarrow{\alpha}_i) &=(i|i+1|i) \\
p(\overleftarrow{\alpha}_i) &=(i|i+1|i) \\
\end{align*}

\begin{align*}
j((i|i+1)) &= \alpha_{i,i+1} \\
j((i+1|i)) &= \alpha_{i+1,i} \\
j((i|i+1|i)) &= \overrightarrow{\alpha}_i \\
\end{align*}

\begin{align*}
H(\overrightarrow{\alpha}_i) &= 0 \\
H(\overleftarrow{\alpha}_i) &= h_i \\
H(\alpha_{i+1,i} \overrightarrow{\alpha}_i) &= \alpha_{i+1,i} h_i \\
H(\overrightarrow{h}_i) &= 0 \\
H(\alpha_{i+1,i} h_i) &= 0 .
\end{align*}

This data is summarized in \eqref{eq:pertdata}.
\begin{equation} \label{eq:pertdata}
\begin{gathered}
\xymatrix@!0{ 
S_{n-1} \ar@(dl,ul)^{H} \ar@/^1pc/[rrr]^p
&&& C_{n-1} \ar@/^1pc/[lll]^j 
}
\end{gathered}
\end{equation}

The perturbation lemma (see \cite[Theorem 2.3]{KeCont}) determines an
$A_{\infty}$-structure on $C_{n-1}$ is given by 
\begin{equation} \label{eq:tree}
m_k = \sum_T m_k^T
\end{equation}
where $T$ ranges over planar rooted binary trees with $k$ leaves and $m_k^T$ is given by composing the tree-shaped diagram obtained by labeling each leaf by $j$, each branch point by multiplication $\mu$ in $S_{n-1}$, each internal edge by $H$, and the root by $p$.
Note that this $A_\infty$-structure is unique up to $A_{\infty}$-isomorphism.

\begin{thm}\label{thm:ainfformulas}
All higher operations $m_k$ vanish for $k>3$.  There are three families of $m_3$ operations:
\begin{align*}
m_3((i|i+1), (i+1|i), (i|i+1|i)) &=  (i|i+1|i) \\
m_3((i-1|i), (i|i-1), (i-1|i)) &=  (i-1|i) \\
m_3((i|i+1|i), (i|i-1), (i-1|i)) &=  (i|i+1|i) .
\end{align*}
The non-zero trees giving rise to these operations are:

\[
\begin{DGCpicture}[scale=0.5]
\DGCPLstrand(0,0)(1,-1)
\DGCPLstrand(2,0)(3,-1)
\DGCPLstrand(4,0)(3,-1)
\DGCPLstrand(1,-1)(2,-2)
\DGCPLstrand(3,-1)(2,-2)
\DGCPLstrand(2,-2)(2,-3)
\DGCcoupon*(-.5,0)(.5,-1){$j$}
\DGCcoupon*(1.5,0)(2.5,-1){$j$}
\DGCcoupon*(3.5,0)(4.5,-1){$j$}
\DGCcoupon*(2.5,-1.2)(3.5,-2.2){$H$}
\DGCcoupon*(.7,-2)(2.7,-3){$p$}
\DGCcoupon*(-3,0)(0,.5){$(i|i+1)$}
\DGCcoupon*(.5,0)(3.25,.5){$(i+1|i)$}
\DGCcoupon*(3.5,0)(6.5,.5){$(i|i+1|i)$}
\end{DGCpicture}
\ ,
\quad
\begin{DGCpicture}[scale=0.5]
\DGCPLstrand(0,0)(1,-1)
\DGCPLstrand(2,0)(3,-1)
\DGCPLstrand(4,0)(3,-1)
\DGCPLstrand(1,-1)(2,-2)
\DGCPLstrand(3,-1)(2,-2)
\DGCPLstrand(2,-2)(2,-3)
\DGCcoupon*(-.5,0)(.5,-1){$j$}
\DGCcoupon*(1.5,0)(2.5,-1){$j$}
\DGCcoupon*(3.5,0)(4.5,-1){$j$}
\DGCcoupon*(2.5,-1.2)(3.5,-2.2){$H$}
\DGCcoupon*(.7,-2)(2.7,-3){$p$}
\DGCcoupon*(-3,0)(0,.5){$(i-1|i)$}
\DGCcoupon*(.5,0)(3.25,.5){$(i|i-1)$}
\DGCcoupon*(3.5,0)(6.5,.5){$(i-1|i)$}
\end{DGCpicture}
\ ,
\quad
\begin{DGCpicture}[scale=0.5]
\DGCPLstrand(0,0)(1,-1)
\DGCPLstrand(2,0)(3,-1)
\DGCPLstrand(4,0)(3,-1)
\DGCPLstrand(1,-1)(2,-2)
\DGCPLstrand(3,-1)(2,-2)
\DGCPLstrand(2,-2)(2,-3)
\DGCcoupon*(-.5,0)(.5,-1){$j$}
\DGCcoupon*(1.5,0)(2.5,-1){$j$}
\DGCcoupon*(3.5,0)(4.5,-1){$j$}
\DGCcoupon*(2.5,-1.2)(3.5,-2.2){$H$}
\DGCcoupon*(.7,-2)(2.7,-3){$p$}
\DGCcoupon*(-3,0)(0,.5){$(i|i+1|i)$}
\DGCcoupon*(.5,0)(3.25,.5){$(i|i-1)$}
\DGCcoupon*(3.5,0)(6.5,.5){$(i-1|i)$}
\end{DGCpicture}
\ .
\]
\end{thm}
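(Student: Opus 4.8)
The strategy is to apply the homotopy transfer formula \eqref{eq:tree} directly to the explicit perturbation data $(p,j,H)$ on $S_{n-1}$, and to argue that for combinatorial reasons only finitely many trees contribute. First I would verify that $(p,j,H)$ is indeed a valid set of transfer (``homotopy retract'') data: that $j$ is a chain map splitting $p$, that $p j = \mathrm{id}_{C_{n-1}}$, and that $\mathrm{id}_{S_{n-1}} - jp = dH + Hd$ on each idempotent summand $1_i S_{n-1} 1_j$. This is a finite check using the bases of $1_i E_n' 1_j$ listed above (note $S_{n-1}=eE_ne$ and its homology is $C_{n-1}$); the only subtle summands are $1_i E_n' 1_i$ and $1_{i+1} E_n' 1_i$, where the contracting homotopy must kill $h_i$, $\overleftarrow{\alpha}_i$ and $\alpha_{i+1,i}h_i$, $\alpha_{i+1,i}\overrightarrow\alpha_i$ respectively, while $H$ vanishes on the rest — exactly what the definition of $H$ prescribes.

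Next, observe that $C_{n-1}$ is generated in $q$-degrees $0$ and $1$ (the arrows $(i|i+1)$ and $(i+1|i)$) with the loops $c_i=(i|i+1|i)$ in $q$-degree $1$, and that the operation $m_k$ has cohomological degree $2-k$. Since $C_{n-1}$ has nonzero homogeneous pieces only in $q$-degrees $0,1,2$, a nonzero $m_k$ must raise total $q$-degree by a bounded amount; more usefully, each leaf contributes a class of $q$-degree $\ge 0$, each internal edge applies $H$, and $H$ as defined is nonzero only on the two specific elements $\overleftarrow\alpha_i$ and $\alpha_{i+1,i}\overrightarrow\alpha_i$ (and $H$ of everything else, including $\overrightarrow\alpha_i$, $\overrightarrow h_i$ and $\alpha_{i+1,i}h_i$, is zero). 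Walking up any tree, after the first branch point the intermediate element must lie in the image of $j$-products landing in the tiny support of $H$; I would show by a short case analysis on the possible products $\mu(j(x),j(y))$ in $S_{n-1}$ that the only surviving configurations are the three $3$-leaf trees depicted, and that any tree with $k\ge 4$ leaves forces an application of $H$ to an element outside its support (or a product that is already zero in $S_{n-1}$, using relations such as $\alpha_{i,i+1}\alpha_{i+1,i+2}=0$ and $\alpha_{i,i+1}\alpha_{i+1,i}\alpha_{i,i+1}=0$), hence vanishes. This yields $m_k=0$ for $k>3$ and $m_1=m_2$ already determined by $S_{n-1}$ being minimal-in-cohomology with the inherited product.

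Finally, for the surviving $3$-leaf trees I would simply compute the composite $p\circ\mu\circ(1\otimes H)\circ(\mu\otimes 1)\circ(j\otimes j\otimes j)$ — equivalently $p\big(\mu(j(x),\,H(\mu(j(x_1),j(x_2))))\big)$ for the relevant orderings — on the three input triples. For the first family, $j((i|i+1))\cdot j((i+1|i)) = \overrightarrow\alpha_i$ is not in the support of $H$, so the only nonzero tree is the one where $H$ is applied after multiplying the last two leaves: $j((i+1|i))\cdot j((i|i+1|i)) = \alpha_{i+1,i}\overrightarrow\alpha_i$, then $H$ sends it to $\alpha_{i+1,i}h_i$, then left-multiply by $j((i|i+1))=\alpha_{i,i+1}$ to get $\alpha_{i,i+1}\alpha_{i+1,i}h_i=\overrightarrow\alpha_i h_i$, and apply $p$; tracking the identification $p(\overrightarrow\alpha_i)=(i|i+1|i)$ gives the stated value $(i|i+1|i)$. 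The second and third families are analogous, using $H(\overleftarrow\alpha_i)=h_i$ in place of $H(\alpha_{i+1,i}\overrightarrow\alpha_i)=\alpha_{i+1,i}h_i$.

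\textbf{Main obstacle.} The genuinely delicate part is the finiteness/vanishing argument for $k\ge 4$: because $H$ is highly non-functorial (it is nonzero on just a couple of basis elements), one must make sure no iterated tree sneaks an element into $\mathrm{Im}(H)$ through a chain of multiplications, and this requires carefully enumerating which products of the basis elements of $1_iE_n'1_j$ can land on $h_i$, $\overleftarrow\alpha_i$, or $\alpha_{i+1,i}\overrightarrow\alpha_i$. Once the possible ``shapes'' of such products are pinned down (they are short, because every relevant product has bounded path length), the $k\le 3$ restriction follows, but getting the case analysis airtight — rather than merely plausible — is where the real work lies.
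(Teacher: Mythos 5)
Your overall route is the same as the paper's: feed the explicit data $(p,j,H)$ into the tree-sum formula \eqref{eq:tree}, exploit the fact that $H$ is supported on just $\overleftarrow{\alpha}_i$ and $\alpha_{i+1,i}\overrightarrow{\alpha}_i$ (with outputs $h_i$ and $\alpha_{i+1,i}h_i$), compute the three surviving $3$-leaf trees, and kill everything else by a case analysis; your computations of the three families (e.g.\ $\alpha_{i,i+1}\cdot H(\alpha_{i+1,i}\overrightarrow{\alpha}_i)=\overrightarrow{\alpha}_ih_i=\overrightarrow{\alpha}_i\mapsto (i|i+1|i)$ under $p$) are correct and match the stated formulas.

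The one genuine gap is in your blanket claim that every tree with $k\geq 4$ leaves ``forces an application of $H$ to an element outside its support or a product that is already zero.'' That dichotomy fails for trees in which both children of the root branch point are internal (e.g.\ the balanced $4$-leaf tree): there both inner products can lie in the support of $H$ — say both equal to $\overleftarrow{\alpha}_i$ — so no $H$ is misapplied and no product vanishes; the root multiplication of the two $H$-outputs gives $h_i$ or $\alpha_{i+1,i}h_i$ (never zero in general, since $h_i^2=h_i$), and the tree dies only because $p$ annihilates these elements. This is precisely the observation the paper's proof centers on: the only nonzero products $H(x)H(y)$ are $h_i$ and $\alpha_{i+1,i}h_i$, and $p$ kills them. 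Your mechanism (an iterated $H$ landing outside its support) is what handles the comb-shaped trees, where the root has a leaf child — note that there the root product need not be $p$-trivial, e.g.\ $\alpha_{i,i+1}h_{i+1}=\alpha_{i,i+1}$ maps to $(i|i+1)\neq 0$, so the vanishing must indeed occur at the outer $H$, as you say. A complete argument needs both cases (and a short induction for deeper combs), so you should add the $p$-kills-$H(x)H(y)$ observation to your case analysis; with that amendment your plan reproduces the paper's proof.
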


\begin{proof}
It is clear that these trees give rise to non-trivial operations stated in the theorem and that the other terms $m_k^T$ in \eqref{eq:tree} are zero in these cases.

In order to see that trees with more than 3 inputs are zero, first note that the possible outputs of $H$ are $h_i$ and $\alpha_{i+1,i} h_i $.  

If there are only $3$ leaves in a tree, then it is straightforward to see that the only non-trivial possibilities are the ones stated in the theorem.

If there are more than three leaves, then, at the root of the tree, two outputs of $H$ must be multiplied in $E_{n}^\prime$ non-trivially and then mapped to something non-zero in $C_n$ via $p$.  Since the only non-zero outputs of $H(x)H(y)$ are $h_i$ and $\alpha_{i+1,i} h_i$, and these are mapped to zero by $p$, we conclude that there could not be any non-trivial operations $m_k$ with $k>3$.
\end{proof}

\paragraph{Non-vanishing of Hochschild cohomology.}
In this section, we prove that the map $m_3 \in C^{3}(C_{n-1},C_{n-1})$ is not a Hochschild coboundary when $n\geq 4$. In more detail, the transfer theorem gave the assignments in Theorem \ref{thm:ainfformulas}. After extending by zero, the map $m_3 : C_{n-1}^{\otimes 3} \to C_{n-1}$ can be interpreted as a degree $-1$ element $m_3 \in C^3(C_{n-1},C_{n-1})^{-1}$ 
of the Hochschild cochain complex,
$$\cdots \to C^2(C_{n-1},C_{n-1})^{-1} \xrightarrow{\delta} C^3(C_{n-1},C_{n-1})^{-1} \to \cdots,$$
computing the degree $-1$ part of Hochschild cohomology of the algebra $C_{n-1}$. We show here that there is no map $m\in C^2(C_{n-1},C_{n-1})^{-1}$ which satisfies $\delta m = m_3$. Since the boundary map $\delta$ preserves degree and the transfer theorem guarantees $\delta m_3 = 0$, the computation below suffices to show that $m_3$ determines a non-trivial class in $\mHH^3(C_{n-1},C_{n-1})^{-1}$.

Now, for any map $m\in C^2(C_{n-1},C_{n-1})^{-1}$, i.e., $m$ is an element in $\Hom(C_{n-1}^{\otimes 2}, C_{n-1}[1])$, homogeneity implies that there are constants  $\alpha_i$, $b_i$, $c_i$, $d_i$ and  $\eta_i$ indexed by the idempotents $i$ of $C_{n-1}$, $1\leq i \leq n-1$, such that
\begin{align*}
    m((i|i+1),(i+1|i)) &= \alpha_i(i)\\
    m((i|i+1|i),(i|i+1)) &= b_i(i|i+1)\\
    m((i|i+1|i), (i|i-1)) &= c_i(i|i-1)\\
    m((i|i+1|i), (i|i+1|i)) &= d_i(i|i+1|i)\\
    m((i|i-1),(i-1|i|i-1)) &= \eta_i(i|i-1).\\
\end{align*}
By evaluating, the equation
$$(\delta m)(x,y,z) = xm(y,z) - m(xy,z) + m(x,yz) - m(x,y)z$$
on inputs $(x,y,z)\in C^{\otimes 3}$ of the form 
$((i|i+1),(i+1|i),(i|i-1))$, $((i|i-1),(i-1|i|i-1),(i-1|i))$, $((i|i+1),(i+1|i),(i|i+1|i))$ and $((i|i+1|i),(i|i+1),(i+1|i))$,
we obtain the equations
\begin{align*}
c_i + \alpha_i &=0\\
b_{i-1}+\eta_i &=0 \\
\eta_{i+1} + d_i + \alpha_i &= 1\\
\alpha_i + d_i + b_i &= 0
\end{align*}
which are inconsistent. To produce these equations, $(i-1),(i),(i+1)$ must be in $C_{n-1}$, so $n\geq 4$.

\section{Categorical symmetries} \label{sec:braidaction}
In this section, we construct a (weak) categorical Temperley-Lieb algebra and braid group action on the compact derived category of the $A_\infty$-algebra $C_{n-1}$.
For notational simplicity, we will often let $C=C_{n-1}$.

For $i=1,\ldots,n-1$, let $C_i:=C(i)$ and ${}_iC:=(i)C$. Also set $B_i=C_i \otimes {}_iC$. This $(C,C)$-bimodule can be regarded as arising from the cohomology of the DG bimodule $S_{n-1} 1_i\otimes 1_i S_{n-1}$ over $S_{n-1}\otimes S_{n-1}$. It follows that $B_i$ has a unique minimal $A_\infty$-structure over $C\otimes C$.

\begin{lemma}
The bimodule $B_i$ has the
$A_{\infty}$-structure 
\[
m_{k,l} \colon C^{\otimes k-1} \otimes B_i \otimes C^{\otimes l-1} \rightarrow B_i
\]
of degree $3-k-l$ by
\[
m_{k,l}= 0 \text{ if } k \text{ and } l > 0 \quad \quad
m_{k,1}= m_k \otimes \Id \quad \quad
m_{1,l} = \Id \otimes m_l
\]
\end{lemma}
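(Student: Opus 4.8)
The plan is to recognise $B_i$ as an external tensor product of minimal $A_\infty$-modules, to identify the asserted operations with the corresponding external $A_\infty$-bimodule structure, and to conclude by the uniqueness of the minimal model already invoked in the text. Recall that $C_i=C(i)$ is the summand of the $A_\infty$-algebra $(C,m_\bullet)$ cut out by the idempotent $(i)$, hence a minimal $A_\infty$-module over $C$ whose operations are the restrictions of the $m_k$; likewise ${}_iC=(i)C$ is a minimal $A_\infty$-module over $C^{\mathrm{op}}$. Since $\dif$ vanishes on $C_i$ and ${}_iC$ and $\Bbbk$ is a field, K\"unneth gives $B_i=C_i\o{}_iC\cong\mH\big(S_{n-1}1_i\o 1_iS_{n-1}\big)$ compatibly with both $C$-actions, so the structure to be computed is, by definition, the unique (up to isomorphism) minimal $A_\infty$-bimodule structure in the quasi-isomorphism class of the DG bimodule $S_{n-1}1_i\o 1_iS_{n-1}$ over $S_{n-1}\o S_{n-1}^{\mathrm{op}}$.

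First I would verify, purely formally, that for any minimal $A_\infty$-module $(N,\mu_\bullet)$ over $C$ and $(N',\nu_\bullet)$ over $C^{\mathrm{op}}$ the operations $m_{k,1}=\mu_k\o\Id_{N'}$, $m_{1,l}=\Id_N\o\nu_l$ and $m_{k,l}=0$ for $k,l\ge 2$ (so $m_{1,1}=0$) solve the $A_\infty$-bimodule relations. In a relation with $A$-inputs on both sides, every inner operation straddling the module factor and absorbing $A$-inputs from both sides is zero, every inner $m_s$ among the $A$-inputs is killed by a vanishing outer factor, and the only two surviving summands --- ``act on the left with $\mu_p$, then on the right with $\nu_q$'' and ``act on the right first, then on the left'' --- are equal as maps on $N\o N'$ and cancel in characteristic $2$; a relation with inputs on only one side reduces verbatim to the module relations for $N$ (resp. $N'$). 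Applying this to $(C_i,m_\bullet)$ and $({}_iC,m_\bullet)$ yields a minimal $A_\infty$-bimodule whose underlying complex and strict bimodule structure are exactly $B_i$.

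It remains to match this external structure with the minimal model of $S_{n-1}1_i\o 1_iS_{n-1}$. For this I would run the homological perturbation lemma (see \cite[Theorem 2.3]{KeCont}) on the DG bimodule with the product contraction built from $(p,j,H)$ of Section~\ref{sec:ainfzz}: as $p$, $j$, $H$ are compatible with the idempotents $1_i$ they restrict to contractions $(p^L,j^L,H^L)$ of $S_{n-1}1_i$ onto $C_i$ and $(p^R,j^R,H^R)$ of $1_iS_{n-1}$ onto ${}_iC$, and one takes the homotopy $H^L\o\Id+j^Lp^L\o H^R$ (the side relations $p^LH^L=0=H^Lj^L$, $p^RH^R=0=H^Rj^R$, $(H^L)^2=0=(H^R)^2$, together with $\dif=0$ on $C_i,{}_iC$, make this tensor-trick homotopy again satisfy the side relations). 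With this choice the tree sum computing $m_{k,l}$ collapses: every tree contributing a would-be mixed operation ($k,l\ge 2$) carries a factor $p^LH^L$, $H^Lj^L$, $p^RH^R$ or $H^Rj^R$ and so vanishes, while trees with inputs on a single side reproduce the transferred module structure on $C_i$ (resp. ${}_iC$) tensored with the identity, i.e. the external operations. By the uniqueness of the minimal $A_\infty$-bimodule structure this proves the lemma. (Conceptually the same follows from monoidality of restriction of $A_\infty$-modules for external tensor products along $C\o C^{\mathrm{op}}\to S_{n-1}\o S_{n-1}^{\mathrm{op}}$.)

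The one genuinely non-formal point is this last step --- showing transfer produces no ``mixing'' higher operations; it hinges entirely on choosing the contracting homotopy so that its action on one tensor factor is forced to pass through $j^Lp^L$ (or $j^Rp^R$) before anything from the other side can interact, after which the side relations kill every mixed tree and the remainder is the bookkeeping of the second paragraph.
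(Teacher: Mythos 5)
Your proof is correct, and your second paragraph is precisely the ``straightforward check'' that the paper's one-line proof refers to: with $m_{k,l}=0$ for $k,l\ge 2$ and $m_{1,1}=0$ by minimality, every term of a mixed bimodule relation vanishes except the two compositions ``left action $\mu_k$ after right action $\nu_l$'' and ``right action after left action,'' which agree as maps on $C_i\o{}_iC$ and cancel in characteristic $2$, while one-sided relations reduce to the $A_\infty$-module relations for $C_i$ and ${}_iC$ (legitimate restrictions of the $m_k$ since $p$, $j$, $H$ respect the idempotents). Your third paragraph goes beyond the paper: the paper merely verifies the relations and lets the preceding uniqueness remark identify this structure with the minimal model of the DG bimodule $S_{n-1}1_i\o 1_iS_{n-1}$, whereas you realize that identification explicitly by transferring along the tensor-trick contraction $H^L\o\Id+j^Lp^L\o H^R$. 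That is a worthwhile addition (it explains structurally why no mixing operations occur), but your justification of the no-mixing claim is slightly too quick: since action vertices intervene along the spine of a transfer tree, a mixed tree does not literally contain an adjacent factor $p^LH^L$, $H^Lj^L$, $p^RH^R$ or $H^Rj^R$; one must track the two tensor factors separately (left actions are invisible to the right factor and vice versa) and argue combinatorially that any spine with both left- and right-action vertices forces one of the six side relations, including $(H^L)^2=0$ and $(H^R)^2=0$, to occur with no multiplication in between on that factor. This is routine to complete, and in any case the lemma as stated is already fully proved by your second paragraph.
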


\begin{proof}
It is straightforward to check that this defines an $A_{\infty}$-structure on the bimodule.    
\end{proof}

\begin{lemma}
On the $(C,C)$-bimodule $C$, there is an $A_{\infty}$-structure 
\[
m_{i,j} \colon C^{\otimes i-1} \otimes C \otimes C^{\otimes j-1} \rightarrow C
\]
of degree $3-i-j$ by $m_{i,j}=m_{i+j-1}$.
\end{lemma}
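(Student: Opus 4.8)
The plan is to verify directly that the proposed operations $m_{i,j} = m_{i+j-1}$ satisfy the defining relations of an $A_\infty$-bimodule over $C = C_{n-1}$, by reducing them to the $A_\infty$-algebra relations for $(C,m)$ which we already know hold (the operations $m = \{m_k\}$ come from the transfer/perturbation lemma applied to $S_{n-1}$, so they satisfy the $A_\infty$-associativity equations). The key observation is purely formal: regarding $C$ as a bimodule over itself, an input string $c_1 \otimes \cdots \otimes c_{i-1} \otimes m \otimes c_{i}' \otimes \cdots \otimes c_{j-1}'$ with the distinguished bimodule slot carrying $m \in C$ is, after forgetting which slot is ``the module,'' just an element of $C^{\otimes i+j-1}$. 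Under this identification $m_{i,j} = m_{i+j-1}$ means the bimodule structure maps are literally the algebra multiplication maps, and the $A_\infty$-bimodule axioms become a subset of — in fact essentially a re-indexing of — the $A_\infty$-algebra axioms.

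Concretely, I would first recall the $A_\infty$-bimodule relations (as in \cite[Section 3]{KeCont}): for each total length $n$ one has a sum, over all ways of inserting a single $m_s$ (either an ``algebra'' $m_s$ acting entirely on the left factors, entirely on the right factors, or the bimodule operation $m_{\bullet,\bullet}$ straddling the module slot) into an outer bimodule operation $m_{\bullet,\bullet}$, which must vanish. Then I would substitute $m_{i,j} = m_{i+j-1}$ throughout. Every term becomes a composite $m_u(1^{\otimes r} \otimes m_s \otimes 1^{\otimes t})$ of algebra operations on the collapsed tensor power $C^{\otimes n-1}$, where $r$, $s$, $t$ record how many tensor factors lie before, inside, and after the inner operation — and this collapsing identifies the bimodule relation for $(i,j)$ with the algebra relation in degree $n-1 = i+j-1$. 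Since working in characteristic two makes all Koszul signs trivial, the bookkeeping is clean: one checks that the correspondence between (decomposition of the bimodule relation) and (decomposition of the algebra relation) is a bijection term-by-term.

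The main point to be careful about — the only place where this could fail — is that the bimodule axioms a priori distinguish three types of inner insertion (left-only, right-only, straddling), and one must confirm that under the substitution these really do reassemble into the single sum appearing in the algebra relation with no term counted twice and none omitted; in particular the straddling operation $m_{r+1,s-r}$ with the module slot in position $r+1$ must be sent to the same algebra operation $m_s$ as the corresponding left or right insertion would be, which is exactly what $m_{i,j} = m_{i+j-1}$ guarantees. Once that matching is in place, the statement follows immediately from the already-established fact that $(C, m)$ is an $A_\infty$-algebra. I expect this to be entirely routine, which is presumably why the authors' proof reads ``It is straightforward to check that this defines an $A_\infty$-structure on the bimodule.''
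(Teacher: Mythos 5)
Your proposal is correct and matches the paper's approach: the paper simply declares the check straightforward and cites Tradler's Lemma 4.1, which is precisely the collapsing/re-indexing argument you spell out (the degree count $3-i-j = 2-(i+j-1)$ and the characteristic-two setting make the bookkeeping exactly as clean as you say). Nothing further is needed.
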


\begin{proof}
It is also straightforward to check that this is an $A_{\infty}$-structure on the bimodule $C$.  For more details see \cite[Lemma 4.1]{Tradler}.
\end{proof}

\begin{lemma}
There is an $A_{\infty}$-bimodule map $f_{i,j} \colon B_k \rightarrow C$ defined by
$f_{i,j}=0$ unless $i=2, j=1$ or $i=1, j=2$.  In these other cases
\begin{align*}
f_{2,1} (\alpha \otimes [\beta(k) \otimes (k) \gamma)]) &= m_3(\alpha, \beta(k), (k) \gamma) \\
f_{1,2} ([\alpha(k) \otimes (k) \beta)] \otimes \gamma) &= m_3(\alpha(k), (k) \beta, \gamma) .
\end{align*}
\end{lemma}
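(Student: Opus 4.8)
The plan is to verify that the proposed sequence of maps $f_{i,j}\colon B_k \to C$ actually satisfies the defining identities of an $A_\infty$-bimodule morphism, using the explicit formulas for $m_3$ from Theorem \ref{thm:ainfformulas} together with the fact that all higher operations $m_k$ vanish for $k>3$. The key structural observation is that, on both $B_k$ and $C$, the $A_\infty$-bimodule operations are built entirely out of the $A_\infty$-algebra operations $m_k$ of $C_{n-1}$ (with at most one tensor-factor of the bimodule "passive" at a time), so the bimodule morphism relations
\[
\sum f_{u}\circ(1^{\otimes r}\otimes m_s\otimes 1^{\otimes t}) = \sum m_{u}\circ(f_r\otimes 1^{\otimes s})
\]
collapse to a small number of instances governed by the $A_\infty$-algebra relations $\sum m_u(1^{\otimes r}\otimes m_s\otimes 1^{\otimes t})=0$ restricted to the degrees where $m_3$ lives.

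First I would write down, for each total arity $N$, exactly which terms of the bimodule-morphism relation are nonzero. Since $f_{i,j}=0$ except for $(i,j)=(2,1),(1,2)$, and $f_{2,1},f_{1,2}$ are given by a single application of $m_3$, and since the bimodule structure maps $m_{k,l}$ on $B_k$ vanish unless $k=1$ or $l=1$, the surviving terms are very limited. The cases $N=1$ and $N=2$ are immediate (the $N=2$ relation reduces to $m_3$ being a chain map for $m_1=0$, i.e.\ to nothing, since $m_1=0$). The substantive cases are $N=3$ and $N=4$. For $N=3$ the relation becomes an identity among $m_3\circ(m_2\otimes 1)$-type compositions and $m_2\circ(m_3\otimes 1)$-type compositions evaluated on the one-dimensional spaces $\bigl((i|i+1)\otimes(i+1|i)\otimes(\cdots)\bigr)$ etc.; this is precisely the statement that $m_3$ satisfies the Stadler/$A_\infty$-pentagon-type relation $\delta m_3 = 0$ already guaranteed by the transfer theorem (and which we used in the Hochschild computation), restricted to bimodule inputs. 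For $N=4$ one uses the vanishing $m_4=0$ together with the fact that $m_3$ followed by $m_3$, after the necessary partial multiplication, lands in degrees where $C_{n-1}$ is zero — this is the same mechanism that forces $m_k=0$ for $k>3$ in the proof of Theorem \ref{thm:ainfformulas}.

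Concretely, I would organize the verification by the position of the "active" bimodule slot $B_k$ among the $N$ inputs and by whether the relevant $m_s$ hits that slot or not. When $m_s$ is applied entirely within the $C$-factors (not touching $B_k$), the term is $f_{i,j}$ of something, which is nonzero only if after contraction we are back to arity $(2,1)$ or $(1,2)$; combined with $f$ itself being a single $m_3$, each such term is a length-two composition $m_3(\cdots m_2(\cdots)\cdots)$. When $m_{k,l}$ acts on the $B_k$ slot it is $m_k\otimes\Id$ or $\Id\otimes m_l$, so again everything is expressed through $m_2, m_3$ of $C_{n-1}$. Matching the two sides then reduces, input-by-input (there are only finitely many monomial inputs in $C_{n-1}$ with the right degree, listed essentially by the three families in Theorem \ref{thm:ainfformulas} and their $B_k$-analogues), to the already-known $A_\infty$-relations for $(C_{n-1},m)$. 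The bookkeeping of signs is trivial since we work in characteristic two.

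I expect the main obstacle to be purely organizational rather than conceptual: enumerating the finitely many nonzero monomial inputs of each arity and checking that on each of them the left- and right-hand sides of the morphism relation agree, being careful that the bimodule operations $m_{k,l}$ of $B_k$ and $C$ (from the two preceding lemmas) are substituted correctly and that the "mixed" terms in which $m_s$ straddles the $B_k$ slot are handled — but since $B_k = C_k\otimes{}_kC$ splits the bimodule slot and the structure maps never mix the two tensor legs, no such straddling terms actually occur. Hence the verification is a finite, sign-free, case check, which is why it is legitimate to assert, as in the other bimodule lemmas of this section, that "it is straightforward to check."

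\begin{proof}
One checks directly that the proposed maps satisfy the $A_\infty$-bimodule morphism identities. Because $f_{i,j}$ vanishes unless $(i,j)\in\{(2,1),(1,2)\}$, and in those cases equals a single application of $m_3$, and because the bimodule operations on $B_k$ and on $C$ are all expressed through the $A_\infty$-algebra operations $m_2,m_3$ of $C_{n-1}$ (with $m_k=0$ for $k\neq 2,3$ by Theorem \ref{thm:ainfformulas}), every morphism relation reduces to finitely many instances of the $A_\infty$-relations for $(C_{n-1},m)$, evaluated on the one-dimensional monomial subspaces occurring in Theorem \ref{thm:ainfformulas}. The relations of arity $\le 2$ hold because $m_1=0$; the arity-$3$ relation is the identity $\delta m_3=0$ supplied by the transfer theorem; and the arity-$4$ relation holds because $m_4=0$ and any iterated application $m_3(\cdots m_3(\cdots)\cdots)$ lands, after the intervening multiplication, in a degree where $C_{n-1}$ vanishes, exactly as in the proof of Theorem \ref{thm:ainfformulas}. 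All signs are trivial in characteristic two.
\end{proof}
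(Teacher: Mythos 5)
Your plan is the same in spirit as the paper's (the paper offers only the assertion that the verification is a straightforward, tedious calculation), but the specific reductions you use to make the check "finite and trivial" are not correct, so as written the argument has a genuine gap. First, the low-arity bimodule-morphism identity is \emph{not} the Hochschild cocycle identity $\delta m_3=0$ supplied by the transfer theorem. At biarity $(2,2)$ the identity to be checked is
\[
f_{2,1}\bigl(a,\ \beta(k)\otimes m_2((k)\gamma,c)\bigr)+f_{1,2}\bigl(m_2(a,\beta(k))\otimes (k)\gamma,\ c\bigr)
= m_2\bigl(a, f_{1,2}(\beta(k)\otimes(k)\gamma, c)\bigr)+m_2\bigl(f_{2,1}(a,\beta(k)\otimes(k)\gamma), c\bigr),
\]
and since $B_k=C(k)\otimes(k)C$ carries no operation multiplying its two tensor legs, the term $m_3\bigl(a,\beta(k)\cdot(k)\gamma,c\bigr)$ of $\delta m_3$ simply does not appear; the identity therefore differs from $\delta m_3=0$ by exactly that term, which is not identically zero as a cochain and has to be controlled using the explicit shape of the three $m_3$ families (and one must be careful with inputs whose bimodule legs are idempotents, e.g.\ $\beta(k)=(k)$, where the two sides behave differently). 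Declaring this case to be "the identity $\delta m_3=0$" skips the actual content.

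Second, your higher-arity claims fail: it is not true that the only substantive arities are $3$ and $4$, that "no straddling terms occur," or that iterated $m_3(\cdots m_3(\cdots)\cdots)$ compositions vanish for degree reasons. The target bimodule $C$ has straddling operations $m_{u,v}=m_{u+v-1}$ (e.g.\ $m_{2,2}=m_3$), and these enter the right-hand side of the morphism relation composed with $f_{2,1}$, $f_{1,2}$. Concretely, at biarity $(2,3)$ with $k=i$, take $a=(i|i+1)$, bimodule element $(i+1|i)\otimes(i|i+1)$, and further algebra inputs $(i+1|i)$, $(i|i+1|i)$: then $f_{2,1}\circ(\mathrm{id}\otimes(\mathrm{id}\otimes m_3))$ gives $m_3\bigl((i|i+1),(i+1|i),(i|i+1|i)\bigr)=(i|i+1|i)$, while $m_3\circ(f_{2,1}\otimes\mathrm{id}\otimes\mathrm{id})$ gives $m_3\bigl(m_3((i|i+1),(i+1|i),(i|i+1)),(i+1|i),(i|i+1|i)\bigr)=(i|i+1|i)$ as well; both are nonzero, and the identity holds only because they cancel in characteristic $2$. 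So the "tedious" part of the paper's check is precisely the enumeration of such nonvanishing terms and their pairwise cancellation, which your write-up replaces by vanishing claims that are false; to repair the proof you must carry out that enumeration (or give a structural argument, e.g.\ realizing $f$ as the transferred image of a strict DG bimodule map at the level of $S_{n-1}$), rather than appeal to $\delta m_3=0$ and degree reasons.
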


\begin{proof}
It is a straightforward, yet tedious, calculation that this defines an $A_{\infty}$-map.
\end{proof}

\begin{defn} Consider the derived category $\mc{D}_\infty(C)$, and let $k\in \{1,\dots, n-1\}$.
    \begin{itemize}
        \item[(i)] The functor $\mc{U}_k$ is the derived tensor functor
        \[
        \mf{U}_k: \mc{D}_\infty(C) \lra \mc{D}_\infty(C), \quad M\mapsto B_k\boxtimes_{C}M.
        \]
        \item[(ii)] The functor $\mc{T}_k$ is the derived tensor functor with the one-sided twisted complex
        \[
        \mf{T}_k: \mc{D}_\infty(C) \lra \mc{D}_\infty(C), \quad M\mapsto (B_k \stackrel{f}{\lra} C)\boxtimes_{C}M.
        \]
    \end{itemize}
\end{defn}

\begin{thm} Let $\mc{D}^c_\infty(C)$ be the compact derived category of the $A_\infty$-algebra $C$. 
    \begin{enumerate}
        \item[(1)] The functors $\mc{U}_i$, $i\in \{1,\dots, n-1\}$ define a  categorical Temperley-Lieb algebra action on $\mc{D}^c_\infty(C)$.
        \item[(2)] The functors $\mc{T}_i$, $i\in \{1,\dots, n-1\}$ define a  categorical braid group action on $\mc{D}^c_\infty(C)$.
    \end{enumerate}
\end{thm}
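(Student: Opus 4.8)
The plan is to transport the categorical structure established for $(A_n^!,\partial)$ in Theorems \ref{thm-TL-action} and \ref{thm-braidrelations} across the Koszul-duality/Morita equivalence relating $(A_n^!,\partial)$ to the DG algebra $(S_{n-1},d)$, and then note that passing to the minimal $A_\infty$-model does not change the derived category. Concretely, the first step is to recall that $S_{n-1} = eE_ne$ with $E_n = \END_{A_n^!}(\oplus_i\mathbb{P}(L_i))$, so that $S_{n-1}$ is quasi-isomorphic to the DG endomorphism algebra of the compact generator $\oplus_{i=1}^{n-1}\mathbb{P}(L_i)$ of the thick subcategory of $\mc{D}^c(A_n^!)$ generated by the spherical simples. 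By the standard DG Morita theorem (Keller), tensoring with this bimodule gives a triangulated equivalence $\mc{D}^c(S_{n-1}) \simeq \langle L_1,\dots,L_{n-1}\rangle \subset \mc{D}^c(A_n^!)$. Since $C_{n-1}\cong \mH(S_{n-1})$ and the minimal $A_\infty$-algebra $(C_{n-1},m)$ from Theorem \ref{thm:ainfformulas} is $A_\infty$-equivalent to $(S_{n-1},d)$, the transfer theorem yields an equivalence $\mc{D}^c_\infty(C) \simeq \mc{D}^c(S_{n-1})$, hence $\mc{D}^c_\infty(C)\simeq \langle L_1,\dots,L_{n-1}\rangle$.

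The second step is to identify the functors $\mc{U}_k$ and $\mc{T}_k$ with the restrictions of $\mf{U}^!_k$ and $\mf{T}^!_k$ under this equivalence. For $\mc{U}_k$: the bimodule $B_k = C_k\otimes {}_kC$ is, by construction, the cohomology of the DG bimodule $S_{n-1}1_k\otimes 1_kS_{n-1}$, which corresponds under Morita theory to the kernel bimodule of $\cup_k\circ\cap_k$ restricted to the subcategory; so $B_k\boxtimes_C(-)$ is intertwined with $\mf{U}^!_k$. For $\mc{T}_k$: the one-sided twisted complex $(B_k\xrightarrow{f}C)$ is, up to $A_\infty$-equivalence, the cone of the canonical DG-bimodule counit $S_{n-1}1_k\otimes 1_kS_{n-1}\to S_{n-1}$ (which is the DG incarnation of the pairing map $\lambda$ defining $\mf{T}^!_k$), and the $A_\infty$-map $f_{i,j}$ of the preceding lemma is precisely the transferred counit; hence $\mc{T}_k$ is intertwined with $\mf{T}^!_k$. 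One must check that the restriction of $\mf{U}^!_k$ and $\mf{T}^!_k$ to the subcategory $\langle L_1,\dots,L_{n-1}\rangle$ lands back in that subcategory — for $\mf{U}^!_k = \cup_k\circ\cap_k$ this is immediate since its image is generated by $L_k$, and for $\mf{T}^!_k$, which is a cone on a map to the identity with the other term a sum of copies of $L_k$, it follows as well.

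With the intertwinings in place, the relations are inherited: part (1) of the theorem is exactly the image of Theorem \ref{thm-TL-action} (the three isomorphism relations defining a categorical Temperley–Lieb action), and part (2) is the image of Theorem \ref{thm-braidrelations}(1)--(2) (the far-commutation and braid relations), together with invertibility of $\mf{T}^!_k$ on $\mc{D}^c(A_n^!)$, which restricts to invertibility of $\mc{T}_k$ on $\mc{D}^c_\infty(C)$. I expect the main obstacle to be the bookkeeping in the second step: making precise that the $A_\infty$-bimodule $B_k$ and the $A_\infty$-map $f$ really are the transfers of the DG bimodule $S_{n-1}1_k\otimes1_kS_{n-1}$ and its counit — i.e., that the homotopy transfer of the kernel bimodule and of the canonical pairing commute with the Morita functor up to coherent $A_\infty$-natural isomorphism. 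This is a soft but somewhat delicate compatibility statement about the functoriality of homotopy transfer along quasi-isomorphisms, and it is where the bulk of the "straightforward yet tedious" work cited in the preceding lemmas is really being used; everything else is a formal consequence of the already-proven $(A_n^!,\partial)$ statements.
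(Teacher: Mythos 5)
Your proposal is correct and follows essentially the same route as the paper: both transport the Temperley--Lieb and braid group actions of Theorems \ref{thm-TL-action} and \ref{thm-braidrelations} on $\mc{D}^c(A_n^!)$ across the equivalences $\mc{D}(A_n^!)\cong\mc{D}(E_n)\cong\mc{D}_\infty(A_n)$ (equivalently, your direct Morita equivalence $\mc{D}^c(S_{n-1})\simeq\langle L_1,\dots,L_{n-1}\rangle$ plus the minimal-model equivalence $\mc{D}^c_\infty(C)\simeq\mc{D}^c(S_{n-1})$), check that $\mf{U}^!_k$ and $\mf{T}^!_k$ preserve the subcategory $\langle L_1,\dots,L_{n-1}\rangle$, and identify the $A_\infty$-bimodules $B_k$ and the twisted complex $(B_k\to C)$ with the kernels of $\mf{U}^!_k$ and $\mf{T}^!_k$. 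The only difference is organizational (the paper first works with the full algebras $E_n$, $A_n$ and then truncates, and verifies the subcategory-preservation by the explicit computation of $\mc{U}^!_i(L_j)$), and your flagged compatibility of homotopy transfer with the Morita functor is exactly the point the paper also treats as a natural identification rather than a detailed verification.
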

\begin{proof}
    The proof essentially reduces to the categorical braid group action of $\mathrm{Br}_n$ on $\mc{D}^c(A_n^!)$ constructed in \cite{QiSussan1} specialized over a field of characteristic two.
    
    By construction, there is an equivalence of DG derived categories
    \begin{subequations}

    \begin{equation} \label{eqn-der-equ-1}
    \mc{D}(A_n^!) \cong \mc{D}(E_n) .      \end{equation}
    On the other hand, the algebra $A_n$ is the cohomology of $E_n$ with its minimal $A_\infty$-algebra structure. By standard arguments about $A_\infty$-algebras (see, for instance, \cite[Section 3.3]{KeAinfty}), there is a quasi-isomorphism of $A_\infty$-algebras between $E_n$ and $A_n$, which induces a triangulated equivalence
    \begin{equation} \label{eqn-der-equ-2}\mc{D}(E_n)\cong \mc{D}_\infty(A_n).
    \end{equation}        
    \end{subequations}
     The derived equivalences \eqref{eqn-der-equ-1} and \eqref{eqn-der-equ-2} descend to equivalences of the corresponding compact derived categories.
    
    Next, the $A_\infty$-bimodule structures on the bimodules defining $\mf{U}_i$ and $\mf{T}_i$ can also be naturally identified with the $A_\infty$-bimodule structure inherited from the functors $\mc{U}_i^!$ and $\mf{T}^!_i$ on $\mc{D}(A_n^!)$.

    In \cite{QiSussan1}, the Temperley-Lieb algebra $\mathrm{TL}_n$ and the $n$-stranded braid group $\mathrm{Br}_n$ act on $\mc{D}(A_n^!)$, generated by $\mf{U}_i^!$ and $\mf{T}_i^!$, $i=1,\dots, n-1$. By Lemma \ref{lemma-DG-ny-resolution}, one can  see that the categorical Temperley-Lieb generators  preserve the smallest triangulated subcategory generated by the first $n-1$ simple DG modules $L_1,\dots, L_{n-1}$:
    \begin{equation}
        \mc{U}_i^! (L_j) = \cup_i\circ \cap_i (L_j) =L_i\otimes (\mathbb{P}(L_i)\otimes_{A_n^!} L_j) =
        \left\{
        \begin{array}{cc}
            L_i \oplus L_i[1] & i=j  \\
            L_i & j=i-1 \\
            L_i[1] & j=i+1 \\
            0 & |i-j|>1
        \end{array}
        \right.
    \end{equation}
    Denote this subcategory by $\langle L_1,\dots, L_{n-1}\rangle$. Under the functor $\HOM_{A_n^!}(\oplus_{i=1}^n \mathbb{P}(L_i),\mbox{-})$, $\langle L_1,\dots, L_{n-1}\rangle$ corresponds to the triangulated subcategory generated by $Q_1,\dots, Q_{n-1}$ (equation \eqref{eqn-Qi}).
    This subcategory, by construction, is equivalent to the compact derived category over the idempotent truncated DG algebra $S_{n-1}$. 
    Upon taking cohomology, $Q_i$ has defines the $A_\infty$-modules, $Z_i$, whose $A_\infty$-endomorphism algebra is equal to $C_{n-1}$, which is the cohomology algebra of the truncated DG algebra $S_{n-1}$. We exhibit the containment relation of the categories involved as follows
    \begin{equation}\label{eqn-equivalences}
        \begin{gathered}
            \xymatrix{
            \mc{D}(A_n^!) \ar[rr]^{\HOM(\oplus_{i=1}^n\mathbb{P}(L_i), \mbox{-})} && \mc{D}(E_n) && \mc{D}_\infty(A_n) \ar[ll]_{\cong} \\
            \langle L_1,\dots, L_{n-1}\rangle \ar@{^{(}->}[u] && \langle Q_1,\dots, Q_{n-1}\rangle \ar@{^{(}->}[u] && \langle Z_1, \dots, Z_{n-1}\rangle \ar@{^{(}->}[u] \\
            && \mc{D}^c(S_{n-1})\ar[u]^{\cong}&&\mc{D}_\infty^c(C_{n-1})\ar[u]^{\cong}
            }
        \end{gathered} \ .
    \end{equation}
    Translating the corresponding action on $\mc{D}^c(A_n^!)$ to the $A_\infty$-algebra $C$ via the above diagram, the Temperley-Lieb algebra and braid group actions become actions on the derived category $\mc{D}_\infty(C_{n-1})$, preserving the full subcategory of compact objects.
\end{proof}

\begin{rmk}
    The objects $L_i$ ($i=1,\dots, n-1$) are analogues of an  \emph{$A_{n-1}$ chain of spherical objects} in the derived category $\mc{D}(A_n^!)$ in the sense of Seidel-Thomas  \cite{SeidelThomas}. Their images in $ \mc{D}_\infty(A_n) $ under the chain of equivalences
    \[ \mc{D}(A_n^!) \cong \mc{D}(E_n) \cong \mc{D}_\infty(A_n),\] 
    are given by the projective modules $C_i$. Thus the collection $C_i$, $i=1,\dots, n-1$
    also constitues such an $A_{n-1}$-chain of spherical objects. The braid group action thus naturally arises from spherical twists along this chain. We will point out the symplecto-geometric meaning of this collection of spherical objects in the next section.
\end{rmk}

\begin{cor}
The categorical braid group action descends to the Burau representation at $q=-1$ in the Grothendieck group.  More specifically, the following squares commute
\begin{equation*}
\xymatrix{
K_0(\mc{D}^c_\infty(C)) \ar[r]^-{\cong} \ar[d]^{[\mf{U}_i]} & V_{n-1} \ar[d]^{u_i} \\
K_0(\mc{D}^c_\infty(C)) \ar[r]^-{\cong} & V_{n-1}
}
\hspace{1in}
\xymatrix{
K_0(\mc{D}^c_\infty(C)) \ar[r]^-{\cong } \ar[d]^{[\mf{T}_i]} & V_{n-1} \ar[d]^{t_i} \\
K_0(\mc{D}^c_\infty(C)) \ar[r]^-{\cong } & V_{n-1}.
}
\end{equation*}
\end{cor}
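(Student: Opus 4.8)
The plan is to deduce this corollary from the main theorem on categorical actions, which was just proved, together with the earlier identification of the Grothendieck groups and the compatibility of the whole chain of equivalences with taking $K_0$. The statement to be proved is that the categorical $\mathrm{TL}_n$- and $\mathrm{Br}_n$-actions on $\mc{D}^c_\infty(C)$ descend, under an isomorphism $K_0(\mc{D}^c_\infty(C))\cong V_{n-1}$, to the Burau representation specialized at $q=-1$; i.e.\ $[\mf{U}_i]$ acts as $u_i$ and $[\mf{T}_i]$ acts as $t_i=1-u_i$.

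First I would set up the isomorphism $K_0(\mc{D}^c_\infty(C))\cong V_{n-1}$. Since $C=C_{n-1}\cong \mH(S_{n-1})$ has $n-1$ indecomposable projectives $C_i=C(i)$, $i=1,\dots,n-1$, the compact derived category $\mc{D}^c_\infty(C)$ is generated by the $A_\infty$-modules $Z_i$ (images of the $Q_i$, equivalently of the $L_i$ under the chain of equivalences \eqref{eqn-equivalences}), and its Grothendieck group is free on the classes $[Z_i]$. Sending $[Z_i]\mapsto v_i$ gives the desired isomorphism with $V_{n-1}=\oplus_{i=1}^{n-1}\Z v_i$, after specializing $q\mapsto -1$ so that $K_0$ becomes an ungraded $\Z$-module (the shift functor $[1]$ acts as $-1$ on $K_0$, matching $q=-1$). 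Then I would compute the action of $[\mf{U}_i]$ on the generators: using the defining tensor formula $\mf{U}_k(M)=B_k\boxtimes_C M$ and the fact that, on the level of the underlying objects, this matches the functor $\mf{U}^!_i=\cup_i\circ\cap_i$ on $\langle L_1,\dots,L_{n-1}\rangle\subset\mc{D}^c(A_n^!)$ via the equivalences in \eqref{eqn-equivalences}, the explicit computation of $\mf{U}^!_i(L_j)$ displayed in the proof of the main theorem gives directly
\[
[\mf{U}_i]([Z_j]) =
\begin{cases}
[Z_i] + [Z_i[1]] = (1+(-1))[Z_i] & i=j \\
[Z_i] & j=i-1 \\
[Z_i[1]] = (-1)[Z_i] & j=i+1 \\
0 & |i-j|>1,
\end{cases}
\]
which is precisely $u_i(v_j)$ at $q=-1$ (so $1+q=0$ and $q=-1$). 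For $[\mf{T}_i]$ I would use that $\mf{T}_i(M)=(B_i\xrightarrow{f}C)\boxtimes_C M$ is a two-step twisted complex (a cone), so on $K_0$ it contributes $[\mathrm{Id}]-[\mf{U}_i]$; equivalently this is the image of the braid functor $\mf{T}^!_i$ of Theorem~\ref{thm-braidrelations} under the equivalences, and $t_i=1-u_i$ by the definition of the braid-group-to-$\mathrm{TL}_n$ homomorphism recalled in Section~\ref{sec:prelim}. Commutativity of both squares then follows by checking equality on the free generators $[Z_j]\leftrightarrow v_j$, together with functoriality of $K_0$.

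The main obstacle — really the only non-formal point — is making precise that the categorical $\mf{U}_i,\mf{T}_i$ on $\mc{D}^c_\infty(C)$ correspond, under the chain of equivalences \eqref{eqn-equivalences} together with the Koszul-duality equivalence $\mc{D}(A_n^!)\cong\mc{D}(E_n)\cong\mc{D}_\infty(A_n)$ and its restriction to the subcategory $\langle L_1,\dots,L_{n-1}\rangle$, to the restrictions of $\mf{U}^!_i,\mf{T}^!_i$. But this identification of bimodules was already established in the proof of the preceding theorem (the $A_\infty$-bimodule structures defining $\mf{U}_i,\mf{T}_i$ are identified with those inherited from $\mf{U}^!_i,\mf{T}^!_i$), so here I would simply invoke it. Given that, the descent to $K_0$ is immediate because all functors involved are exact (triangulated) and the equivalences induce isomorphisms on Grothendieck groups; the Burau identification on $\mc{D}^c(A_n^!)$ is itself standard (it is how the original construction of \cite{QiSussan1} relates to the Burau representation, and matches the Chen–Khovanov/Stroppel picture recalled in the introduction), so transporting it gives the claim. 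Thus the proof is essentially: \emph{choose generators, transport the known $K_0$-computation through the equivalences, and read off the Burau matrices at $q=-1$}.

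\begin{proof}
By the theorem above, the equivalences of \eqref{eqn-equivalences} identify the categorical Temperley--Lieb and braid group actions of $\mf{U}_i, \mf{T}_i$ on $\mc{D}^c_\infty(C)$ with the restrictions to $\langle L_1,\dots,L_{n-1}\rangle\subset\mc{D}^c(A_n^!)$ of the functors $\mf{U}^!_i, \mf{T}^!_i$ of Theorems~\ref{thm-TL-action} and \ref{thm-braidrelations}. Passing to Grothendieck groups, $K_0(\mc{D}^c_\infty(C))$ is free of rank $n-1$ on the classes $[Z_i]$, $i=1,\dots,n-1$, and we fix the isomorphism $K_0(\mc{D}^c_\infty(C))\xrightarrow{\cong} V_{n-1}$, $[Z_i]\mapsto v_i$, specializing $q\mapsto-1$, so that the shift $[1]$ acts as multiplication by $-1$.

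From the computation of $\mf{U}^!_i(L_j)$ recorded in the proof of the theorem above and the correspondence $L_j\leftrightarrow Z_j$, one reads off, at $q=-1$,
\[
[\mf{U}_i]([Z_j])=
\begin{cases}
(1+q)[Z_i] & i=j,\\
[Z_i] & j=i-1,\\
q[Z_i] & j=i+1,\\
0 & |i-j|>1,
\end{cases}
\]
which is exactly the action of $u_i$ on $v_j$. Hence the left square commutes.

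For the braid functors, $\mf{T}_i(M)=(B_i\xrightarrow{f}C)\boxtimes_C M$ is the cone of a map, so on $K_0$ it acts as $[\mathrm{Id}]-[\mf{U}_i]$; equivalently this follows by transporting $\mf{T}^!_i$ through the equivalences. Since the braid generator $t_i$ acts on $V_{n-1}$ as $1-u_i$ by the homomorphism $\mathrm{Br}_n\to\mathrm{TL}_n$ recalled in Section~\ref{sec:prelim}, the right square commutes as well.
\end{proof}
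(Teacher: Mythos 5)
Your proposal is correct and follows essentially the same route as the paper: the paper's proof simply says the corollary is a direct translation of \cite[Proposition 5.16]{QiSussan1} through the diagram of equivalences \eqref{eqn-equivalences}, which is exactly the transport argument you carry out, with the added benefit that you write out the $K_0$-computation ($[Z_j]\mapsto v_j$, shift acting as $-1$, $[\mf{T}_i]=[\mathrm{Id}]-[\mf{U}_i]$ from the cone) explicitly rather than citing it.
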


\begin{proof}
This is a direct translation of \cite[Proposition 5.16]{QiSussan1} via the equivalences in the diagram \eqref{eqn-equivalences}.
\end{proof}

\section{Remarks on connections to symplectic topology} \label{sec:symp}

The zigzag algebras $C_{n-1}$ appear geometrically as endomorphism algebras of Lagrangians $\oplus_i L_i$ in Fukaya categories $\mathcal{D}_\infty(\mathcal{F}(S))$ \cite{KS}. When the grading conventions are those of Section \ref{sec:ainfzz}, the relevant symplectic manifold $S$ is a surface and the identification
\begin{equation}\label{eq:seidel}
C_{n-1}\cong \END_{\mathcal{D}_\infty (\mathcal{F}(S))}(\oplus_i L_i)
\end{equation}
endows the zigzag algebra with some auxiliary geometric content. The purpose of this section is to suggest how to locate the $A_\infty$-structure of Theorem \ref{thm:ainfformulas}, obtained by algebraic means, within the geometric construction of the Fukaya category.

Slightly oversimplifying, the Fukaya category $\mathcal{F}(S)$ of a surface $S$ has objects oriented embedded  1-manifolds $L\subset S$ with spin structure. When $L, L'\in Ob(\mathcal{F}(S))$ are transverse, morphisms $CF(L,L') := \mathbb{F}_2\langle L\pitchfork L' \rangle$ are spanned by points of intersection. The $A_\infty$-structure is encoded by maps $m_r : \mathcal{F}(S)^{\otimes r} \to \mathcal{F}(S)$ for $r\geq 1$.  If the collection $\{L_0,\ldots, L_r\}$ is pairwise transverse, then these maps
\begin{align}
m_r : CF(L_r,L_{r-1})\otimes \cdots \otimes CF(L_1,L_0)\to CF(L_{r},L_0) \quad\quad\textnormal{ are given by }\quad\quad \nonumber\\
m_r(x_r, \ldots, x_1) := \sum_{x_0\in L_n\pitchfork L_1} \#\mathcal{M}(x_{r},\ldots,x_0) x_0 \label{eq:mn}
\end{align}
 a (signed) count of immersed disks representing points in moduli spaces $\mathcal{M}(x_{r},\ldots,x_0)$ of virtual dimension zero \cite[II (13b)]{Seidel}. Figure \ref{fig:diskdiagram} depicts such a disk among Lagrangians which have been perturbed to be transverse. Each map $m_r$ can be extended to a map $m_r : \oplus_n \mathcal{F}(S)^{\otimes n}\to \mathcal{F}(S)$ so that the $A_{\infty}$-relations become $d^2=0$ after setting $d:=\sum_n m_n$. Since $\mathcal{F}(S)$ is too small to be triangulated, focus is placed on the triangulated hull $\mathcal{D}_\infty (\mathcal{F}(S))$ \cite[I (3h)]{Seidel}.

 Within this setting, consider simple closed curves arranged in the pattern depicted by the figure below.
\begin{center}
    \begin{overpic}[scale=1.5]{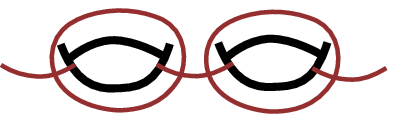}
\put(25.5,-5){$L_{i-1}$}
\put(67.5,-5){$L_{i+1}$}
\put(47.5,-5){$L_i$}
\put(5.5,-5){$\cdots$}
\put(87.5,-5){$\cdots$}
\end{overpic}
\end{center}
Each curve being a circle determines a spherical object in the Fukaya category. The braid group representation is generated by the twist functors 
$$T_{L_i} : \mathcal{D}_\infty (\mathcal{F}(S)) \to \mathcal{D}_\infty(\mathcal{F}(S))\quad\quad\quad\quad  T_{L_i}(X) := \mathrm{Cone}(\Hom(L_i,X)\otimes Y \xrightarrow{ev} X).$$
When $L_i$ has non-trivial spin structure, the action of $T_{L_i}$ agrees with the Dehn twist on $\mathcal{F}(S)$ \cite[Corollary 17.18]{Seidel}. For two curves, $L_i$ and $L_j$ above, the associated Dehn twists either commute or satisfy the braid relation depending upon whether the geometric intersection number is $0$ or $1$ respectively \cite[page 84]{FarbMargalit}. These objects determine the zigzag algebra $C_{n-1}$ in Equation \eqref{eq:seidel} above. The algebra $A_n$ can obtained by adjoining an embedded interval to $\oplus_i L_i$ \cite[Figure 3]{KS}.

When the curves $\{L_0,\ldots,L_r\}$ are not pairwise transverse, some care is needed to define the $A_\infty$-structure. One approach is to limit over all chain complexes associated to (isomorphic) Hamiltonian perturbations of Lagrangians, in this manner the map $m_r$ can be computed for any non-transverse collection $\{L_0,\ldots,L_r\}$ by any transverse family obtained by Hamiltonian isotopies. 
With this in mind, to compute $m_3((i|i+1), (i+1|i), (i|i+1))$ in Theorem \ref{thm:ainfformulas} we perturb both $L_i$ and $L_{i+1}$ to obtain isomorphic objects $\tilde{L}_i$  and $\tilde{L}_{i+1}$. Under the identifications made in the figure above, maps, such as $(i|i+1)$ correspond to distinct points of intersection, this results in the figure below.
\begin{figure}[htp]
    \centering
\begin{overpic}[scale=.85]{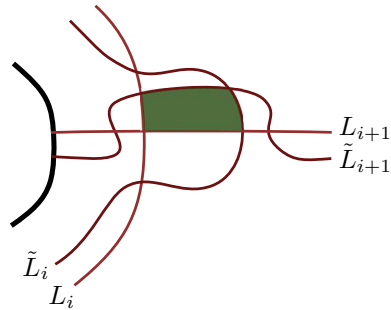}
 \put(102, 47.5){$L_{i+1}$}
 \put(102, 37.5){$\tilde{L}_{i+1}$}
 \put(12,-5){$L_i$}
\put(4,4){$\tilde{L}_i$}
\end{overpic}
    \caption{A disk among perturbations}\label{fig:diskdiagram}
\end{figure}
Here we suggest that, since we are working with coefficients in characteristic $2$, we can define a new map $\tilde{m}_3$ which counts the entire moduli space associated to the green disk. For a related comment see \cite[Remark 13.4]{Seidel}. Setting $\tilde{m}_r = 0$ for $r\neq 3$ and $\delta = \sum_r \tilde{m}_r$ the algebraic portion of this paper shows that $(d+\delta)^2=0$ and so an $A_\infty$-structure is obtained. We expect that $\delta$ admits a geometric definition as a sum over moduli spaces parallel to Equation \eqref{eq:mn}.

\bibliographystyle{alpha}
\bibliography{master}

%

\vspace{0.1in}


\vspace{0.1in}
\noindent B.~C.: {\sl \small Department of Mathematics, University of Iowa, Iowa City, IA, 52242, USA}
\newline\noindent  {\tt \small email: ben-cooper@uiowa.edu}

\noindent Y.~Q.: {\sl \small Department of Mathematics, University of Virginia, Charlottesville, VA, 22904, USA}
\newline\noindent  {\tt \small email: yq2dw@virginia.edu}

\noindent J.~S.: {\sl \small Department of Mathematics, CUNY Medgar Evers, Brooklyn, NY, 11225, USA}
\newline\noindent  {\tt \small email: jsussan@mec.cuny.edu}
\newline \noindent {\sl \small Mathematics Program\\
 The Graduate Center, CUNY, New York, NY 10016, USA}

%
\end{document}